\numberwithin{equation}{section}
\let\OLDthebibliography\thebibliography
\renewcommand\thebibliography[1]{
  \OLDthebibliography{#1}
  \setlength{\parskip}{0pt}
  \setlength{\itemsep}{2pt plus 0.5ex}
}
\def\@cite#1#2{{\m@th\upshape\bfseries%
[{#1\if@tempswa{\m@th\upshape\mdseries, #2}\fi}]}}
\theoremstyle{plain}
\newtheorem{theorem}{Theorem}[section]
\newtheorem{corollary}[theorem]{Corollary}
\newtheorem{proposition}[theorem]{Proposition}
\theoremstyle{definition}
\newtheorem{definition}[theorem]{Definition}
\newtheorem{remark}[theorem]{Remark}
\newtheorem*{acknow}{Acknowledgements}
\theoremstyle{remark}
  \newcommand{\A}{{\mathcal{A}}}
  \newcommand{\B}{{\mathcal{B}}}
  \newcommand{\E}{{\mathcal{E}}}
  \newcommand{\G}{{\mathcal{G}}}
  \newcommand{\I}{{\mathcal{I}}}
  \newcommand{\J}{{\mathcal{J}}}
  \newcommand{\M}{{\mathcal{M}}}
\renewcommand{\P}{{\mathcal{P}}}
\renewcommand{\S}{{\mathcal{S}}}
  \newcommand{\T}{{\mathcal{T}}}
  \newcommand{\V}{{\mathcal{V}}}
\def\De{\Delta}
\def\de{\delta}
\def\la{\lambda}
\def\om{\omega}
\def\Om{\Omega}
\def\si{\sigma}
\newcommand{\bC}{\mathbb{C}}
\newcommand{\bN}{\mathbb{N}}
\newcommand{\bZ}{\mathbb{Z}}
\newcommand{\fe}{{\mathfrak{e}}}
\newcommand{\ff}{{\mathfrak{f}}}
\newcommand{\fg}{{\mathfrak{g}}}
\newcommand{\fv}{{\mathfrak{v}}}
\newcommand{\Bx}{{\mathbf{x}}}
\newcommand{\By}{{\mathbf{y}}}
\newcommand{\Bz}{{\mathbf{z}}}
\newcommand{\foral}{\text{ for all }}
\newcommand{\qand}{\quad\text{and}\quad}
\newcommand{\qfor}{\quad\text{for}\quad}
\newcommand{\ca}{\mathrm{C}^*}
\newcommand{\cenv}{\mathrm{C}^*_{\textup{env}}}
\newcommand{\ol}{\overline}
\newcommand{\ad}{\operatorname{ad}}
\newcommand{\alg}{\operatorname{alg}}
\newcommand{\env}{\operatorname{\textup{env}}}
\newcommand{\id}{{\operatorname{id}}}
\newcommand{\mt}{\emptyset}
\newcommand{\qd}{\operatorname{qd}}
\newcommand{\scv}{\operatorname{sc}}
\newcommand{\spn}{\operatorname{span}}
\newcommand{\sumoplus}{\operatornamewithlimits{\sum \strut^\oplus}}
\newcommand{\sca}[1]{\left\langle#1\right\rangle} 
\newcommand{\nor}[1]{\left\Vert #1\right\Vert} 
\newcommand{\quo}[2]{{\raisebox{.1em}{$#1$}\left/ \, \raisebox{-.1em}{$#2$}\right.}} 
\begin{document}

\title[Boundary quotient C*-algebras of semigroups]{Boundary quotient C*-algebras of semigroups}

\author[E.T.A. Kakariadis]{Evgenios~T.A.~Kakariadis}
\address{School of Mathematics, Statistics and Physics\\ Newcastle University\\ Newcastle upon Tyne\\ NE1 7RU\\ UK}
\email{evgenios.kakariadis@newcastle.ac.uk}

\author[E.G. Katsoulis]{Elias~G.~Katsoulis}
\address{Department of Mathematics\\ East Carolina University\\ Greenville\\ NC 27858\\USA}
\email{katsoulise@ecu.edu}

\author[M. Laca]{Marcelo Laca}
\address{Department of Mathematics and Statistics\\ University of Victoria\\ Victoria\\ BC\\ Canada}
\email{laca@uvic.ca}

\author[X. Li]{Xin Li}
\address{School of Mathematics and Statistics\\ University of Glasgow\\ University Place\\ Glasgow\\ G12 8QQ\\ UK}
\email{xin.li@glasgow.ac.uk}

\thanks{2020 {\it  Mathematics Subject Classification.} 46L08, 46L05, 47L55, 20M32}

\thanks{{\it Key words and phrases:} Semigroup algebras, partial crossed product, coactions, boundary quotient, C*-envelope.}

\begin{abstract}  
We study two classes of operator algebras associated with a unital subsemigroup $P$ of a discrete group $G$: 
one related to universal structures, and one related to co-universal structures.
First we provide connections between universal C*-algebras that arise variously from isometric representations of $P$ that reflect the space $\mathcal{J}$ of constructible right ideals, from associated Fell bundles, and from induced partial actions.
This includes connections of appropriate quotients with the strong covariance relations in the sense of Sehnem.
We then pass to the reduced representation $\mathrm{C}^*_\lambda(P)$ and we consider the boundary quotient $\partial \mathrm{C}^*_\lambda(P)$ related to the minimal boundary space.
We show that $\partial \mathrm{C}^*_\lambda(P)$ is co-universal in two different classes: (a) with respect to the equivariant constructible isometric representations of $P$; and (b) with respect to the equivariant C*-covers of the reduced nonselfadjoint semigroup algebra $\mathcal{A}(P)$.
If $P$ is an Ore semigroup, or if $G$ acts topologically freely on the minimal boundary space, then $\partial \mathrm{C}^*_\lambda(P)$ coincides with the usual C*-envelope $\mathrm{C}^*_{\text{env}}(\mathcal{A}(P))$ in the sense of Arveson.
This covers total orders, finite type and right-angled Artin monoids, the Thompson monoid, multiplicative semigroups of nonzero algebraic integers, and the $ax+b$-semigroups over integral domains that are not a field.
In particular, we show that $P$ is an Ore semigroup if and only if there exists a canonical $*$-isomorphism from $\partial \mathrm{C}^*_\lambda(P)$, or from $\mathrm{C}^*_{\text{env}}(\mathcal{A}(P))$, onto $\mathrm{C}^*_\lambda(G)$.
If any of the above holds, then $\mathcal{A}(P)$ is shown to be hyperrigid.
\end{abstract}

\maketitle

\section{Introduction} \addtocontents{toc}{\protect\setcounter{tocdepth}{1}}

The use of C*-constructs has been central in the study of geometric and topological objects such as semigroups, graphs, dynamical systems etc.
It goes as far back as the work of Murray and von Neumann in the 1930's and 1940's, and has been a continuous source of inspiration for further developments.
On one hand one obtains a universal object that covers appropriate representations of the object by Hilbertian operators.
On the other hand one can ask for the minimal quotient that still carries a faithful copy of the original data, through a boundary space.
Similar questions appear in the nonselfadjoint context through the seminal work of Arveson \cite{Arv69} and the notion of the \v{S}ilov boundary.
In a recent work of the authors with Dor-On \cite{DKKLL20} it has been shown that C*-boundaries and Arveson's \v{S}ilov boundary interconnect in a rather solid way in the case of product systems.
In this work we continue to investigate boundary quotients in both the C*- and the nonselfadjoint context. 
This is part of a bigger programme that aims to elucidate the relations between boundary operator algebras.

Here we examine several operator algebras associated with a unital subsemigroup $P$ of a group $G$, always assuming that $P$ generates $G$.
This type of operator algebras have been under thorough investigation by many authors and connections have been established with C*-algebras of more general constructs.
The principal examples come from abelian semigroups, starting with $\bN$ in the seminal work of Coburn \cite{Cob67}, and more general totally ordered groups by Douglas \cite{Dou72} and Murphy \cite{Mur87}.
Nica \cite{Nic92} introduced quasi-lattice ordered semigroups and established the study of their semigroup C*-algebras as universal models of the left regular representation on $\ell^2(P)$.
Further motivating examples come from geometric group theory \cite{CL02, CL07, LOS18} or have a number-theoretic origin \cite{CDL13, CEL13, CEL15}. 
We refer the reader to \cite{CELY17} and the references therein.

A major motivation for working with semigroups is that their operator algebras lie in the intersection of several categories of current interest, such as partial dynamical systems, product systems and Fell bundles.
The different realizations bring in existing results from each category for studying their structure.
Conversely, semigroup algebras can act as a medium for cross-pollination of techniques between their wider supercategories, as well as a testing ground for new structural theorems.
As examples we mention the realization: by partial crossed products for computing the K-theory \cite{CEL13}, for computing the KMS-states \cite{CDL13}, or for deducing simplicity \cite{Li17}; and as product systems for nuclearity results \cite{Kak19, Li12}.

The representations of $P$ that we consider are more than just by isometries as the universal isometric C*-algebra $\ca_{\textup{iso}}(P)$ may not be carrying much structure; for example $\ca_{\textup{iso}}(\bN^2)$ fails to be even nuclear.
The point of inspiration is the left regular representation $\ca_\la(P)$ and in this paper we consider isometric representations of $P$ that reflect various aspects of the structure of the constructible right ideals as they are manifested in $\ca_\la(P)$.
The study is carried mainly in five directions: 
%
%
\begin{inparaenum}
\item universal/maximal constructs; 
\item reduced representations; 
\item boundary quotients; 
\item co-universal C*-algebras; and 
\item \v{S}ilov boundaries.
\end{inparaenum}
%
%
An important aspect in (i)--(iv) has been the existence of a coaction by the group $G$ that induces a topological grading.
In the past, connections with (v) had been established for abelian lattices where the coaction is well understood as an action of the compact dual \cite{DFK17, DK20}.
Three natural questions that arise in this context:

\vspace{1pt}

\begin{enumerate}
\item[{\bf Qn.\ 1.}] What is the connection between universal semigroup constructions? \vspace{1pt} 
\item[{\bf Qn.\ 2.}] What is the connection between the induced semigroup boundary quotients? \vspace{1pt} 
\item[{\bf Qn.\ 3.}] Is there a co-universal semigroup C*-algebra and what is its form?
\end{enumerate}

\vspace{1pt}

Here we answer these questions guided by two beacons (see Figure \ref{F:roadmap} for a summary).
First we establish links between different classes of representations of $P$ and the universal C*-algebra $\ca_s(P)$ of the constructible representations.
Secondly we identify the boundary quotient C*-algebra $\partial \ca_\la(P)$ as the co-universal object in both the C*-algebraic and the nonselfajoint algebraic approaches.
Below we provide a detailed discussion of these results.

\subsection{On universal C*-algebras}

Our starting point is the universal C*-algebra $\ca_s(P)$ with respect to contructible semigroup representations in the sense of \cite{Li12}.
This is an equivariant quotient of the universal isometric C*-algebra $\ca_{\textup{iso}}(P)$, that takes into account the space $\J$ of the constructible right ideals in $P$, and thus sits closer to the structure of $\ca_{\la}(P)$.
A second important variant is the boundary quotient $\ca_{\scv}(P)$ of $\ca_{\textup{iso}}(P)$, given by the strongly covariant representations of $P$ as induced by Sehnem's work on product systems \cite{Seh18}.
This universal C*-algebra models the isometric representations that lift automatically to faithful representations on the fixed point algebra.
In Proposition \ref{P:scv ex} we show that strongly covariant representations are constructible, providing the vertical arrow in the following commutative diagram:
{\small
\[
\xymatrix@R=.3cm@C=.5cm{
\ca_{\textup{iso}}(P) \ar[rr] \ar[ddrr] & & \ca_s(P) \ar[rr] \ar[dd] & & \ca_{\la}(P) \\
& & & & \\
& & \ca_{\scv}(P) & &
}.
\]
}

Universal/maximal constructs are easier to work with, as they enjoy exactness with respect to induced ideals.
We use this type of arguments for identifying $\ca_s(P)$ and $\ca_{\scv}(P)$ with C*-algebras associated with Fell bundles and partial actions.
On one hand we consider the induced Fell bundles $\P_s$ inside $\ca_s(P)$, and $\P_{\scv}$ inside $\ca_{\scv}(P)$.
On the other hand we use the fixed point algebra $D_s(P)$ of $\ca_s(P)$ and the minimal $G$-invariant subspace $\partial \Om_P$ of its spectrum to realize them as partial crossed products (see Theorem \ref{T:Fell bdl univ} and Theorem \ref{T:un pcp}).
Namely we have
\[
\ca_s(P) \simeq \ca(\P_s) \simeq D_s(P) \rtimes G
\qand
\ca_{\scv}(P) \simeq \ca(\P_{\scv}) \simeq C(\partial \Om_P) \rtimes G.
\]
In passing, we show that $\ca_s(P) \simeq \ca(\I_V)$ for the inverse semigroup
\[
\I_V := \{V_{p_1}^* V_{q_1} \cdots V_{p_n}^* V_{q_n} \mid p_i, q_i \in P; n \in \bZ_+\}
\]
of $V$-words in $\ca_\la(P)$ (see Theorem \ref{T:inv sgrp}), which improves a previous result of Li \cite{Li13} by removing the Toeplitz condition from $P$.
By using a result of Norling \cite{Nor14} we deduce that $\J$ is independent if and only if $D_s(P) \simeq D_\la(P)$ canonically (see Corollary \ref{C:nor-li}).
These results address {\bf (Qn.\ 1)}.
In general $\ca_\la(\P_s)$ differs from $\ca_\la(P)$.
The algrebraic relations that define the Fell bundle induced in $\ca_\la(P)$ are completely described by Laca and Sehnem in their recent work \cite{LS21}.
Even though we do not rely here on \cite{LS21}, it has motivated us in the final stages of this research to free the setup from the independence condition on $\J$ and work at the general level of the constructible representations of $\ca_s(P)$.

\subsection{On reduced C*-algebras}

In \cite{Li17} it has been established that $\ca_\la(P)$ coincides with the partial crossed product of a partial action of $G$ on the diagonal algebra $D_\la(P)$.
It coincides with the reduced C*-algebra of $\P_s$ if and only if $\J$ is independent.
We have three further reduced boundary quotients arising from: 

\vspace{1pt}

\begin{enumerate}[label=(\alph*)]
\item the partial crossed  product picture, i.e., $C(\partial \Om_P) \rtimes_r G$; \vspace{1pt}
\item the strong covariance Fell bundle, i.e., $\ca_\la(\P_{\scv})$; and \vspace{1pt}
\item the strong covariance relations in the left regular representation, i.e., $q_{\scv}(\ca_{\la}(P))$.
\end{enumerate}

\vspace{1pt}

\noindent
Items (a) and (b) are automatically $*$-isomorphic due to the realization of $\ca_{\scv}(P)$ as $C(\partial \Om_P) \rtimes_r G$.
Under exactness we can pass the results from the universal C*-algebras between items (a) and (c) down to the reduced case.
Indeed exactness induces a normal co-action on $q_{\scv}(\ca_{\la}(P))$ from $\ca_\la(P)$, and thus we can use the fixed-point-algebra property of $q_{\scv}(\ca_{\la}(P))$ inherited from $\ca_{\scv}(P)$ (see Theorem \ref{T:Fell bdl univ}).
This settles {\bf (Qn.\ 2)} and provides the upper-left part of Figure \ref{F:roadmap}.

\subsection{On co-universality}

We next turn our attention to {\bf (Qn.\ 3)}.
By \cite[Lemma 5.7.10]{CELY17} $\partial \Om_P$ embeds as the smallest, closed, $G$-invariant, nonempty subset of the spectrum of $D_s(P)$.
Consequently in Theorem \ref{T:co-universal} we deduce that the quotient map
\[
\ca_s(P) \to \partial \ca_\la(P)
\]
factors through any non-trivial equivariant representation of $\ca_s(P)$, i.e., $\partial \ca_\la(P)$ is co-universal for the non-trivial equivariant constructible representations of $P$.

On the other hand we connect $\partial \ca_\la(P)$ to a \v{S}ilov boundary.
The natural candidate for a nonselfadjoint algebra is the closed algebra $\A(P)$ generated by the image of $P$ inside $\ca_\la(P)$.
The normal coaction $\ol{\de}$ on $\ca_\la(P)$ descends to $\A(P)$ giving rise to the cosystem $(\A(P), G, \ol{\de})$, and in Theorem \ref{T:part} we establish that
\[
\partial \ca_\la(P) \simeq \cenv(\A(P), G, \ol{\de}).
\]

We further explore conditions under which $\cenv(\A(P), G, \ol{\de})$ is $\cenv(\A(P))$.
This is automatic for abelian semigroups, however it is unknown if in general $\cenv(\A(P))$ admits automatically a coaction of $G$.
By using the simplicity criteria of \cite{Li17} we find that this is the case when $P$ is an Ore semigroup or when $G$ acts topologically freely on $\partial \Om_P$.
In particular, if $P$ is an Ore semigroup then the C*-envelope is the usual $\ca_\la(G)$.
Surprisingly this is also a defining property for a semigroup $P \subseteq G$ to be Ore.
We thus derive the right-lower part of Figure \ref{F:roadmap}.

These diagrams expand on the theory of Ore semigroups in amenable groups.
In this case the scheme collapses into two distinct classes of $*$-isomorphic C*-algebras: (a) the Fock type ones
\[
\ca(\P_s) \simeq D_s(P) \rtimes_r G 
\simeq \ca_\la(P) \simeq D_\la(P) \rtimes_r G;
\]
and (b) their boundary quotients
\begin{align*}
\ca_{\scv}(P)  
& \simeq 
q_{\scv}(\ca_{\la}(P)) 
\simeq
\partial \ca_\la(P) := C(\partial \Om_P) \rtimes_r G
\simeq 
\cenv(\A(P), G, \ol{\de})
\simeq
\cenv(\A(P)) 
\simeq
\ca_\la(G),
\end{align*}
which have the co-universal property with respect to $\ca_s(P)$.

\subsection*{Structure of sections}

In Section \ref{S:pre} we provide the preliminaries on boundary quotients and the C*-envelope.
In Section \ref{S:sgrp alg} we gather the constructions that arise from $\ca_s(P)$ and connect with Fell bundles and partial crossed products.
In passing we also provide the identifications for the corresponding full C*-algebras.
In Section \ref{S:nc bdy} we study the boundary quotient $\partial \ca_\la(P)$ and we establish its co-universal properties.

\begin{figure}[p!]
{\footnotesize
\[
\xymatrix@R=.6cm@C=.3cm{
\ar@{..}[r] \ar@{..}[ddddddd] & 
\textup{universal} \ar@{..}[d] \ar@{..}[rr] & & 
\textup{reduced} \ar@{..}[d] \ar@{..}[rrrrr] & & & & & 
\textup{$\partial$-quotients} \ar@{..}[ddddddddd] \ar@{..}[rrr] & & & 
\ar@{..}[ddddddddddddd] \\
& D_s(P) \rtimes G \ar[dd]^{\simeq^{\textup{[Thm \ref{T:un pcp}]}}} \ar[rr] & & 
D_s(P) \rtimes_r G \ar[dd]^{\simeq^{\textup{[Thm \ref{T:un pcp}]}}} & &
& & & & & & \\
& & & & & & & & & & & \\
& \ca(\I_V) \ar[dd]^{\simeq^{\textup{[Thm \ref{T:inv sgrp}]}}} \ar[rr] & & 
\ca_\la(\I_V) \ar[dd]^{\simeq^{\textup{[Thm \ref{T:Fell bdl univ}]}}} & &
& & & & & & \\
& & & & & & & & & & & \\
& 
\ca(\P_s) \ar[dd]^{\simeq^{\textup{[Thm \ref{T:Fell bdl univ}]}}} \ar[rr] & & 
\ca_\la(\P_s) \ar[dd]^{\textup{iff $\J$:independent, } \simeq^{\textup{[Thm \ref{T:Fell bdl univ}]}}} & &
& & & & & &\\
& & & & & & & & & & & \\
\ca_{\textup{iso}}(P) \ar[r] \ar[ddr] \ar@{..}[ddddddddddd] & 
*+[Fo]{\ca_s(P)} \ar[dd]^{{}^{\textup{[Prop \ref{P:scv ex}]}}} \ar[rr] & & 
\ca_\la(P) \ar[ddrrrrr] \ar[dd] & & & & & 
\hspace{-5.9cm} \simeq D_\la(P) \rtimes_r G & & & & \\
& & & & & & & & & & & \\
&
\ca_{\scv}(P) \ar[dd]^{\simeq^{\textup{[Thm \ref{T:un pcp}]}}} \ar[rr] 
& &
q_{\scv}(\ca_{\la}(P)) \ar[dddd]^{\textup{if $G$:exact, } \simeq^{\textup{[Thm \ref{T:Fell bdl univ}]}}}
\ar[rrrrr]^{\simeq}_{\textup{if $G$:exact}} 
& & & & & 
*+[Fo]{\partial \ca_\la(P)} \ar[dddd]^{\simeq^\textup{[Thm \ref{T:part}]}}
& 
\hspace{-1.cm} := C(\partial \Om_P) \rtimes_r G & & \\
& & & & & & & & & & & \\
&
C(\partial \Om_P) \rtimes G \ar[dd]^{\simeq^{\textup{[Thm \ref{T:Fell bdl univ}]}}} & & & & & & & & & & \\
& & & & & & & & & & & \\
& 
\ca(\P_{\scv}) \ar[rr] \ar@{..}[ddddd] & &
\ca_\la(\P_{\scv}) \ar[rrrrr]^{\simeq^\textup{[Thm \ref{T:un pcp}]}} \ar@{..}[ddddd]
& & & & &
\cenv(\A(P), G, \ol{\de})
\ar[dddd]^{\simeq^\textup{[Thm \ref{T:top free}]}}_{\textup{if $G \curvearrowright \partial \Om_P$:top.\ free}}
\ar[rrr]^{\phantom{ooo} \simeq^\textup{[Thm \ref{T:co-universal}]}} 
\ar[ddddrrr]^{\hspace{.2cm} \simeq^\textup{[Thm \ref{T:Ore}]}}_{\phantom{oooo} \textup{iff $P$:Ore} \hspace{.2cm}}
& & &
\partial_G [\ca_s(P)] \ar[dddd]^{\simeq^\textup{[Rem \ref{R:Ore}]}}_{\textup{iff $P$:Ore}}\\
& & & & & & & & & & \\
& & & & & & & & & & \\
& & & & & & & & & & \\
& & & & & & & &
\cenv(\A(P)) \ar@{..}[d] \ar[rrr]^{\simeq^\textup{[Thm \ref{T:Ore}]}}_{\textup{iff $P$:Ore}}  & & &
\ca_\la(G) \ar@{..}[d] \\
\ar@{..}[rrrrrrrrrrr] & & & & & & & & & & &
}
\]
}
\caption{Diagram with main results.}
\label{F:roadmap}



{\footnotesize
\begin{flushleft}
$\bullet$ $\ca_\la(P)$: The reduced C*-algebra of $P$.

\vspace{1pt}

$\bullet$ $\A(P)$: The reduced semigroup algebra of $P$ in $\ca_\la(P)$.

\vspace{1pt}

$\bullet$ $\ca_{\textup{iso}}(P)$: the universal C*-algebra with respect to isometric representations of $P$. 

\vspace{1pt}

$\bullet$ $\ca_s(P)$: the universal semigroup algebra of $(P, \J)$ in the sense of \cite{Li12}.

\vspace{1pt}

$\bullet$ $\ca_{\scv}(P)$: the strong covariant algebra of the trivial product system on $P$ in the sense of \cite{Seh18}.

\vspace{1pt}

$\bullet$ $q_{\scv}(\ca_{\la}(P))$: The quotient of the reduced C*-algebra by strong covariant relations.

\vspace{1pt}

$\bullet$ $\rtimes G$ is the universal partial crossed product; $\rtimes_r G$ is the reduced partial crossed product.

\vspace{1pt}

$\bullet$ $\partial_G [\ca_s(P)]$: the co-universal C*-algebra with respect to $G$-equivariant representations of $\ca_s(P)$.

\vspace{1pt}

$\bullet$ $\cenv(\A(P), G, \ol{\de})$ is the C*-envelope of the cosystem $(\A(P), G, \ol{\de})$; $\cenv(\A(P))$ is C*-envelope of $\A(P)$.

\vspace{1pt}

$\bullet$ $\I_V$: the inverse semigroup induced by the left regular representation of $P$.

\vspace{1pt}

$\bullet$ Fell bundles: $\P_s$ induced in $\ca_s(P)$; $\P_{\scv}$ induced in $\ca_{\scv}(P)$.

\vspace{1pt}

$\bullet$ Spaces: $D_s(P) := [\ca_s(P)]_e$; $D_\la(P) := [\ca_\la(P)]_e$; $\Om_P = (D_\la(P))^*$; $\partial \Om_P$ is the minimal $G$-subspace in $\Om_P$.

\vspace{1pt}
\end{flushleft}
}

\end{figure}

\begin{acknow}
Part of the research was carried out at the Banff International Research Station during the Focused Research Group week on ``Noncommutative boundaries for tensor algebras'' (20frg248).
Evgenios Kakariadis acknowledges support from EPSRC as part of the programme ``Operator Algebras for Product Systems'' (grant No.\ EP/T02576X/1).
Marcelo Laca was partially supported by NSERC Discovery Grant RGPIN-2017-04052.
Xin Li has received funding from the European Research Council (ERC) under the European Union's Horizon 2020 research and innovation programme (grant agreement No.\ 817597).
We would like to thank Adam Dor-On for several helpful discussions during the initial stages of the project.
We also thank Camila F.\ Sehnem for pointing out that the application of our Theorem \ref{T:top free} to $ax+b$-semigroups of rings was more general than originally stated, leading to the updated Remark~\ref{rem:finalremark}(vi).
\end{acknow}

\section{Preliminaries} \label{S:pre} 

\subsection{The \v{S}ilov boundary}

The reader may refer to \cite{Pau02} for the general theory of nonselfadjoint operator algebras and dilations of their representations, which we will avoid repeating here in full length.

Let $\A$ be an operator algebra, which in this paper means a closed subalgebra of $\B(H)$ for a Hilbert space $H$.
We say that $(C, \iota)$ is a \emph{C*-cover} of $\A$ if $\iota \colon \A \to C$ is a completely isometric representation with $C = \ca(\iota(\A))$.
The \emph{C*-envelope} $\cenv(\A)$ of $\A$ is a C*-cover $(\cenv(\A), \iota)$ with the following co-universal property:
if $(C', \iota')$ is a C*-cover of $\A$ then there exists a (necessarily unique) $*$-epimorphism $\Phi \colon C' \to \cenv(\A)$ such that $\Phi(\iota'(a)) = \iota(a)$ for all $a \in \A$.
Arveson defined the C*-envelope in \cite{Arv69} and computed it for a variety of operator algebras, predicting its existence in general.
Ten years later Hamana \cite{Ham79} confirmed Arveson's prediction by proving the existence of injective envelopes for the unital case.
The C*-envelope is the C*-algebra generated in the injective envelope of $\A$ once this is endowed with the Choi-Effros C*-structure. 

Dritschel and McCullough \cite{DM05} provided an alternative proof based on maximal dilations for the unital case.
A \emph{dilation} of a representation $\phi \colon \A \to \B(H)$ is a representation $\phi' \colon \A \to \B(H')$ such that $H \subseteq H'$ and $\phi(a) = P_H \phi'(a) |_H$ for all $a \in \A$.
A completely contractive map $\phi \colon \A \to \B(H)$ is called \emph{maximal} if every dilation $\phi' \colon \A \to \B(H')$ is trivial, i.e., $P_H \phi'(a) =\phi(a) = \phi'(a) |_H$ for all $a \in \A$.
It follows that the C*-envelope is the C*-algebra generated by a maximal completely isometric representation.
It does not hold in general that if $\pi \colon \cenv(\A) \to \B(H)$ is a $*$-representation then it is the unique contractive completely positive (ccp) extension of $\pi|_{\A}$.
The algebra $\A$ is called \emph{hyperrigid} if this is the case for any representation $\pi$ of $\cenv(\A)$.

The basic examples of C*-envelopes arise in the context of uniform algebras: the C*-envelope of a uniform algebra is formed by the continuous functions on its \v{S}ilov boundary.
The unconditional existence of the C*-envelope provides a non-commutative analogue of this result.
Consider $\A \subseteq \ca(\A)$.
An ideal $\I \lhd \ca(\A)$ is called a \emph{boundary ideal} if the quotient map $q_{\I} \colon \ca(\A) \to \ca(\A)/\I$ restricts to a completely isometric map on $\A$.
The \emph{\v{S}ilov ideal} $\I_s$ is by definition the boundary ideal that contains all boundary ideals of $\A$.
The existence of the C*-envelope implies the existence of the \v{S}ilov ideal; in particular it follows that $\cenv(\A)$ is canonically isomorphic to $\ca(\A)/\I_s$.

\subsection{Coactions on operator algebras}

We denote the minimal tensor product by $\otimes$.
We will need some elements about coactions on C*algebras as well as some results from \cite{DKKLL20} about coactions on operator algebras.

For a discrete group $G$ we write $u_g$ for the unitary generator associated with $g \in G$ in the full group C*-algebra $\ca(G)$.
We write $\la_g$ for the generators of the left regular representation $\ca_\la(G)$.
Recall that $\ca(G)$ admits a faithful $*$-homomorphism
\[
\De \colon \ca(G) \to \ca(G) \otimes \ca(G) ; u_g \mapsto u_g \otimes u_g.
\]
On the other hand $\ca_\la(G)$ admits a faithful $*$-homomorphism
\[
\De_\la \colon \ca_\la(G) \to \ca_\la(G) \otimes \ca_\la(G) ; \la_g \mapsto \la_g \otimes \la_g.
\]

\begin{definition}\label{D:cis ncoa} \cite[Definition 3.1]{DKKLL20}
Let $\A$ be an operator algebra.
A \emph{coaction of $G$ on $\A$} is a completely isometric representation $\de \colon \A \to \A \otimes \ca(G)$ such that the linear span of the induced subspaces
\[
\A_g := \{a \in \A \mid \de(a) = a \otimes u_g\}
\] 
is norm-dense in $\A$, in which case $\de$ satisfies the coaction identity
\[
(\de \otimes \id_{\ca(G)}) \de = (\id_{\A} \otimes \De) \de.
\]
If, in addition, the map $(\id \otimes \la) \de$ is injective then the coaction $\de$ is called \emph{normal}. 

If $\A$ is an operator algebra and $\de \colon \A \to \A \otimes \ca(G)$ is a coaction on $\A$, then we will refer to the triple $(\A, G, \de)$ as a \emph{cosystem}. 
A map $\phi \colon \A \to \A'$ between two cosystems $(\A, G, \de)$ and $(\A', G, \de')$ is said to be \emph{$G$-equivariant}, or simply equivariant, if $\de' \phi=(\phi\otimes \id)\de$.
\end{definition}

If $(\A, G, \de)$ is a cosystem then $\A_r \cdot \A_s \subseteq \A_{r s}$ for all $r, s \in G$, since $\de$ is a homomorphism.

\begin{remark} \cite{DKKLL20}
A coaction $\de$ of $G$ on $\A$ is automatically \emph{non-degenerate}, in the sense that 
\[
\ol{\de(\A) \left[I \otimes \ca(G)\right]} = \A \otimes \ca(G).
\]
In particular suppose that $\de \colon \ca(\A) \to \ca(\A) \otimes \ca(G)$ is a $*$-homomorphism satisfying the coaction identity
\[
(\de \otimes \id) \de(c) = (\id \otimes \De) \de(c) \foral c \in \ca(\A),
\]
and $(\A, G, \de|_{\A})$ is a cosystem.
Then $\de$ is automatically non-degenerate on $\ca(\A)$, i.e.,
\[
\ol{\de(\ca(\A)) \left[\ca(\A) \otimes \ca(G)\right]} = \ca(\A) \otimes \ca(G).
\]
In particular the definition of the coaction here extends that of a full coaction on a C*-algebra by Quigg \cite{Qui96}.
\end{remark}

\begin{remark}\label{R:reduced} \cite{DKKLL20}
Suppose that $\A$ admits a ``reduced'' coaction in the sense that there is a faithful map $\de_\la \colon \A \to \A \otimes \ca_\la(G)$ that satisfies the coaction identity
\[
(\de_\la \otimes \id_{\ca_\la(G)}) \de_\la(a) = (\id_{\ca(\A)} \otimes \De_\la) \de_\la(a) \foral a \in \A,
\]
and for which the linear span of the induced subspaces $\A_g := \{a \in \A \mid \de_\la(a) = a \otimes \la_g\}$ is norm-dense in $\A$.
Due to Fell's absorption principle, $\de_\la$ promotes to a normal coaction $\de$ of $G$ on $\A$ such that $\de_\la = (\id \otimes \la) \de$.
\end{remark}

\begin{definition}\label{D:co-action} \cite[Definition 3.6]{DKKLL20}
Let $(\A, G, \de)$ be a cosystem.
A triple $(C, \iota, \de_C)$ is called a \emph{C*-cover} for $(\A, G, \de)$ if $(C, \iota)$ is a C*-cover of $\A$ and $\de_C \colon C \to C \otimes \ca(G)$ is a coaction on $C$ such that the diagram
\[
\xymatrix{
\A \ar[rr]^{\iota} \ar[d]^{\de} & & C \ar[d]^{\de_C} \\
\A \otimes \ca(G) \ar[rr]^{\iota \otimes \id} & & C \otimes \ca(G)
}
\]
commutes.
\end{definition}

\begin{definition} \cite[Definition 3.7]{DKKLL20}
Let $(\A, G, \de)$ be a cosystem.
The \emph{C*-envelope} of $(\A, G, \de)$ is a C*-cover $(\cenv(\A, G, \de), \iota, \de_{\env})$ such that: for every C*-cover $(C', \iota', \de')$ of $(\A, G, \de)$ there exists a $*$-epimorphism $\Phi \colon C' \to \cenv(\A, G, \de)$ that fixes $\A$ and intertwines the coactions, i.e., the diagram
\[
\xymatrix{
\iota'(\A) \ar[rrr]^{\de'} \ar[d]^{\Phi} & & & C' \otimes \ca(G) \ar[d]^{\Phi \otimes \id} \\
\iota(\A) \ar[rrr]^{\de_{\env}} & & & \cenv(\A, G, \de) \otimes \ca(G)
}
\]
is commutative on $\A$, and thus is commutative on $C'$.
\end{definition}

The existence of the C*-envelope of a cosystem was proved in \cite{DKKLL20} by a direct computation  that uses the C*-envelope of the ambient operator algebra.
In order to state the result explicitly we need to make some preliminary remarks and establish the notation.
Suppose $(\A, G, \de)$ is a cosystem, let $i \colon \A \to \cenv(\A)$ be the C*-envelope of $\A$, and recall that the spatial tensor product of completely isometric maps is completely isometric.
Then the representation of $\A$  obtained via the  composition
\[
\xymatrix@C=2cm{
\A \ar[r]^{\de} & \A \otimes \ca(G) \ar[r]^{i \otimes \id} \ar[r] & \cenv(\A) \otimes \ca(G)
}
\]
is completely isometric, and the C*-algebra 
\[
\ca((i \otimes \id) \de(\A)) := \ca(i(a_g) \otimes u_g \mid g \in G)
\]
becomes a C*-cover of $\A$.
This C*-cover is special because it admits a coaction $\id \otimes \De$, so that the triple
\[
(\ca(i(a_g) \otimes u_g \mid g \in G), (i \otimes \id) \de, \id \otimes \De)
\]
becomes a C*-cover for $(\A, G, \de)$. 
The following theorem summarizes fundamental results about existence and representations of C*-envelopes for cosystems.

\begin{theorem} \label{T:DKKLL} \cite[Theorem 3.8 and Corollary 3.10]{DKKLL20}
Let $(\A, G, \de)$ be a cosystem and let $i \colon \A \to \cenv(\A)$ be the inclusion map.
Then
\[
(\cenv(\A, G, \de), \iota, \de_{\env}) \simeq (\ca(i(a_g) \otimes u_g \mid g \in G), (i \otimes \id )\de, \id \otimes \De).
\]
If in addition $\de$ is normal on $\A$ then $\de_{\env}$ is normal on $\cenv(\A, G, \de)$.

Moreover if $\Phi \colon \cenv(\A, G, \de) \to B$ is a $*$-homomorphism that is completely isometric on $\A$ then it is faithful on the fixed point algebra of $\cenv(\A, G, \de)$.
\end{theorem}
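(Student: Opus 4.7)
The plan is to verify that the concrete triple $E := (\ca(i(a_g) \otimes u_g \mid g \in G), (i \otimes \id) \circ \de, (\id \otimes \De)|_E)$ is a C*-cover for the cosystem satisfying the required co-universal property, so that the claimed $*$-isomorphism follows from uniqueness up to isomorphism. The embedding $\iota := (i \otimes \id) \circ \de$ is completely isometric because it is a composition of completely isometric maps, and $E = \ca(\iota(\A))$ by construction. The map $\id \otimes \De$ sends each generator $i(a_g) \otimes u_g$ to $(i(a_g) \otimes u_g) \otimes u_g \in E \otimes \ca(G)$, so it restricts to a coaction $\de_{\env}$ on $E$; the coaction identity follows from coassociativity of $\De$, the generators lie in the appropriate spectral subspaces so their span is norm-dense in $E$, and the intertwining $\de_{\env} \circ \iota = (\iota \otimes \id) \circ \de$ is immediate on each $a_g \in \A_g$.

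For the co-universal property, given a C*-cover $(C', \iota', \de')$ of the cosystem, I would apply the universal property of $\cenv(\A)$ to obtain a $*$-epimorphism $\pi \colon C' \to \cenv(\A)$ with $\pi \iota' = i$, and then set $\Phi := (\pi \otimes \id) \circ \de' \colon C' \to \cenv(\A) \otimes \ca(G)$. On each generator $\iota'(a_g)$, using $\de' \iota'(a_g) = \iota'(a_g) \otimes u_g$, we find $\Phi(\iota'(a_g)) = \pi(\iota'(a_g)) \otimes u_g = i(a_g) \otimes u_g$, which simultaneously shows that $\Phi$ lands in $E$, that $\Phi \iota' = \iota$, that $\Phi$ surjects onto $E$, and that it intertwines $\de'$ with $\de_{\env}$.

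For the normality claim, assume $\de$ is normal, so $\iota_\la := (i \otimes \la) \circ \de \colon \A \to \cenv(\A) \otimes \ca_\la(G)$ is completely isometric. Then $E_\la := \ca(i(a_g) \otimes \la_g \mid g \in G)$ is a C*-cover of $\A$ carrying the reduced coaction $(\id \otimes \De_\la)|_{E_\la}$, which by Remark \ref{R:reduced} extends to a normal full coaction $\wt\de$, turning $(E_\la, \iota_\la, \wt\de)$ into a C*-cover for the cosystem. Co-universality of $E$ yields a $*$-epimorphism $\Phi' \colon E_\la \to E$ with $\Phi' \iota_\la = \iota$, so writing $\eta := (\id \otimes \la)|_E \colon E \to E_\la$ we see $\Phi' \eta = \id$ on $\iota(\A)$ and hence on all of $E$; in particular $\eta$ is injective. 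To conclude, if $(\id \otimes \la) \de_{\env}(b) = 0$ then applying $\id \otimes \la \otimes \id$ and using the identity $(\la \otimes \la) \De = \De_\la \la$ rewrites this as $(\id \otimes \De_\la)(\eta(b)) = 0$, and the injectivity of $\De_\la$ and of $\eta$ now force $b = 0$.

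Finally, suppose $\Phi \colon \cenv(\A, G, \de) \to B$ is a $*$-homomorphism completely isometric on $\A$. Then $C' := \Phi(\cenv(\A, G, \de)) = \ca(\Phi \iota(\A))$ is a C*-cover of $\A$, so the universal property of $\cenv(\A)$ supplies $\rho \colon C' \to \cenv(\A)$ with $\rho \Phi \iota = i$. Writing $\eps \colon \ca(G) \to \bC$ for the trivial representation, the map $\theta := (\id \otimes \eps)|_E \colon E \to \cenv(\A)$ agrees with $\rho \Phi$ on each generator $i(a_g) \otimes u_g$ and therefore on all of $E$. Since the fixed point algebra of $E$ sits inside $\cenv(\A) \otimes 1$ and $\theta$ restricts to the canonical identification of it with a subalgebra of $\cenv(\A)$, the composition $\rho \Phi$ — and hence $\Phi$ itself — is injective on the fixed point algebra. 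I expect the normality step to be the most delicate, since it requires constructing the auxiliary reduced C*-cover $E_\la$ and combining the co-universal property of $E$ with the injectivity of $\De_\la$ to transfer injectivity of $(\id \otimes \la) \de$ on $\A$ into injectivity of $(\id \otimes \la) \de_{\env}$ on the larger algebra $E$.
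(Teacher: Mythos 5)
This statement is quoted from \cite{DKKLL20} and the paper offers no proof of its own, so there is nothing in-text to compare against; your reconstruction follows the same architecture as the cited source (exhibit $E := \ca(i(a_g)\otimes u_g)$ directly as the co-universal C*-cover, obtain the epimorphism from an arbitrary cover $(C',\iota',\de')$ as $(\pi\otimes\id)\de'$, and read off the fixed-point statement from the slice $\id\otimes\eps$). Parts 1, 2 and 4 of your argument are correct and complete: in particular your observation that every degree-$e$ word in the generators of $E$ lies in $\cenv(\A)\otimes 1$, so that $\rho\Phi$ coincides with $(\id\otimes\eps)|_E$ and is therefore injective on the fixed point algebra, is exactly the right mechanism.

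The one place that needs shoring up is the opening move of the normality step: you write that normality of $\de$ implies $\iota_\la := (i\otimes\la)\de$ is \emph{completely isometric}, and you need this in full strength, since $(E_\la,\iota_\la,\wt\de)$ must be a genuine C*-cover of the cosystem before the co-universal property of $E$ can be invoked to produce the left inverse $\Phi'$ of $\eta$. But Definition \ref{D:cis ncoa}, as recorded here from \cite{DKKLL20}, only defines normality as \emph{injectivity} of $(\id\otimes\la)\de$, and for a completely contractive homomorphism of operator algebras injectivity does not formally yield complete isometry. Every identity available from the coaction axioms and Fell absorption (e.g.\ $(\Psi\otimes\la)\de = (\id\otimes\De_\la)\Psi$ for $\Psi=(\id\otimes\la)\de$) delivers only the inequality $\|\Psi(a)\|\le\|a\|$, never the reverse, so this implication is genuinely extra input --- it is established within the framework of \cite{DKKLL20} but must be proved or cited rather than asserted. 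Everything downstream of it (the factorization $\Phi'\eta=\id_E$, hence injectivity of $\eta$, and the identity $(\id\otimes\la\otimes\id)(\id_E\otimes\la)\de_{\env} = (\id\otimes\De_\la)\eta$ combined with faithfulness of $\De_\la$) is correct.
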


Let us close this section with some remarks on topological gradings from \cite{Exe97, Exe17}.
Recall that a \emph{topological grading} $\{\B_g\}_{g \in G}$ of a C*-algebra $\B$ consists of linearly independent subspaces that span a dense subspace of $\B$ and are compatible with the group $G$, i.e., $\B_g^* = \B_{g^{-1}}$ and $\B_g \cdot \B_h \subseteq \B_{g h}$.
By \cite[Theorem 3.3]{Exe97} the linear independence condition can be substituted by the existence of a conditional expectation on $\B_e$.
The maximal C*-algebra $\ca(\B)$ of $\B$ is defined as universal with respect to the representations of $\B$.
The reduced C*-algebra $\ca_\la(\B)$ of $\B$ is defined by the left regular representation of $\B$ on $\ell^2(\B)$.
If $\Phi$ is a representation of $\B$ then the range of $\Phi$ has a natural topological grading $\Phi(\B) = \{\Phi(\B_g)\}_{g \in G}$.
By \cite[Proposition 21.3]{Exe17} there are equivariant $*$-homomorphisms making the following diagram
\[
\xymatrix{
\ca_{\max}(\B) \ar[rr] \ar[d] & & \ca_\la(\B) \ar[d] \\
\ca_{\max}(\Phi(\B)) \ar[r] & \ca(\Phi) \ar[r] & \ca_\la(\Phi(\B))
}
\]
commutative.
A topological grading defines a \emph{Fell bundle} and once a representation of a Fell bundle is established the two notions are the same.
In a loose sense a Fell bundle $\B$ over a discrete group $G$ is a collection of Banach spaces $\{\B_g\}_{g \in G}$, often called \emph{the fibers of $\B$}, that obey to canonical algebraic properties and the C*-norm properties; see \cite[Definition 16.1]{Exe17}.
So we will alternate between these two notions.
Spectral subspaces of coactions on C*-algebras are an important source of topological gradings.

\begin{definition}
Let $\B = \{C_g\}_{g \in G}$ be a topological grading for a C*-algebra $C$ over a group $G$. 
We say that an ideal $\I \lhd C$ is \emph{induced} if $\I = \sca{\I \cap C_e}$.
\end{definition}

If $\de \colon C \to C \otimes \ca(G)$ is a coaction on a C*-algebra and $\I \lhd C$ is an induced ideal then $\de$ induces a faithful coaction $C / \I$, see for example \cite[Proposition A.1]{CLSV11}. 
Normal actions also descend through induced ideals when $G$ is exact, see for example \cite[Proposition A.5]{CLSV11}. 

\begin{definition}
Let $\de$ be a coaction of $G$ on a C*-algebra $C$ and let $\I \lhd C$ be an ideal of $C$.
We say that the quotient map is \emph{$G$-equivariant}, or that the quotient $C/\I$ is \emph{$G$-equivariant} if $\de$ descends to a coaction of $G$ on $C/\I$.
\end{definition}

\section{Semigroup algebras} \label{S:sgrp alg} \addtocontents{toc}{\protect\setcounter{tocdepth}{2}}

In this section we present the concrete and the universal C*-algebras that are related to semigroups, partial crossed products and product systems.
We provide the identification of several universal and reduced C*-algebras that arise in this context.

\subsection{The reduced semigroup algebra}

Let $P$ be a unital semigroup in a discrete group $G$.
We write
\[
\ca_\la(P) := \ca(V_p \mid p \in P)
\qand
\A(P) := \ol{\alg}\{V_p \mid p \in P\}
\]
for the C*-algebra and the operator algebra, respectively, generated by the left-creation operators
\[
V_p \colon \ell^2(P) \to \ell^2(P) ; \de_s \mapsto \de_{ps}.
\]
Let $U \colon \ell^2(P) \otimes \ell^2(G) \to \ell^2(P) \otimes \ell^2(G)$ be the unitary operator determined by
\[
U(\de_s \otimes \de_g) = \de_s \otimes \de_{sg} \foral s \in P, g \in G,
\]
and let $\la_g$ be the canonical unitary corresponding to $g \in G$ in the left regular representation of $G$.
A routine calculation shows that
\[
( V_p \otimes \la_p ) U = U (V_p \otimes I)
\foral 
p \in P.
\]
Thus the $*$-homomorphism $\ol{\de}_\la$ obtained by composition
\[
\xymatrix{
\ca_\la(P) \ar[rr]^{\simeq \phantom{ooooooo}} & &
\ca(V_p \otimes I \mid p \in P) \ar[rr]^{\ad_{U}} & &
\ca(V_p \otimes \la_p \mid p \in P)
}
\]
is faithful and satisfies the coaction identity.
We also note that
\begin{align*}
[\ca_\la(P)]_g 
& := 
\{a \in \ca_\la(P) \mid \ol{\de}_\la(a) = a \otimes \la_g\} \\
& \supseteq 
\spn\{V_{p_1}^* V_{q_1} \cdots V_{p_n}^* V_{q_n} \mid p_1^{-1} q_1 \cdots p_n^{-1} q_n = g; n \in \bZ_+; p_i, q_i \in P\},
\end{align*}
and thus by construction these fibers are norm-dense in $\ca_\la(G)$.
The reverse inclusion, and hence equality, follows by applying $\id \otimes E_g$, where $E_g$ is the $g$-th Fourier coefficient map on $\ca_\la(G)$.
In particular by restricting to $\A(P)$ we get that
\[
[\A(P)]_p = \bC \cdot V_p \foral p \in P
\qand
[\A(P)]_g = (0) \foral g \notin P.
\]

Let $\ol{\de} \colon \ca_\la(P) \to \ca_\la(P) \otimes \ca(G)$ be the normal coaction induced by $\ol{\de}_\la$.
The induced faithful conditional expectation $E_\la$ is implemented by compressing $(\id \otimes \la) \ol{\de}$ to the $(e,e)$-entry and thus
\[
E_\la(V_{p_1}^* V_{q_1} \cdots V_{p_n}^* V_{p_n})
=
\begin{cases}
V_{p_1}^* V_{q_1} \cdots V_{p_n}^* V_{q_n} & \text{if } p_1^{-1}q_1 \cdots p_n^{-1} q_n = e,\\
0 & \text{otherwise}.
\end{cases}
\]
In \cite{Li12} it has been established that 
\[
E_\la(V_{p_1}^* V_{q_1} \cdots V_{p_n}^* V_{q_n}) = \sum_{s \in P} Q_s (V_{p_1}^* V_{q_1} \cdots V_{p_n}^* V_{q_n})) Q_s,
\]
where $Q_s$ is the projection on $\bC \cdot \de_s$.
In other words, $E_\la$ is the faithful conditional expectation on $\ca_\la(P)$ given by compressing to the diagonal.
We will write $D_\la(P)$ for the fixed point algebra $[\ca_\la(P)]_e$ of $\ol{\de}$ on $\ca_\la(P)$.

We will require some additional facts from \cite{Li12}.
For a set $X \subseteq P$ and $p \in P$ we write
\begin{equation}\label{eqn:pX&pinvX}
pX:=\{px \mid x \in X\}
\qand
p^{-1}X := \{y \in P \mid py \in X\}.
\end{equation}
Note here that by definition $p^{-1} P = P$.
We write $\J$ for the smallest family of right ideals of $P$ containing $P$ and $\mt$ that is closed under left multiplication and taking pre-images under left multiplication (as in the sense above), i.e.,
\[
\J := \left\{ p_1^{-1} q_1 \dots p_n^{-1} q_n P \mid n \in \bZ_+; p_i, q_i \in P, 1 \leq i \leq n \right\} \cup \{\mt\}.
\]
The elements in $\J$ are called \emph{constructible} right ideals of $P$.
It is important to notice  that a constructible right ideal $p_{1}^{-1} q_{1} \dots p_{ n}^{-1} q_{n} P$ does not depend on the product $ p_{1}^{-1} q_{1} \dots p_{ n}^{-1} q_{n}$ as an element of $G$ because the second operation in \eqref{eqn:pX&pinvX} involves the  pre-image of multiplication in $P$ and not in $G$.
It follows from \cite[Lemma 3.3]{Li12} that 
\[
q_1^{-1} p_1 \dots q_m^{-1} p_m p_m^{-1} q_m \dots p_1^{-1} q_1 X = (q_1^{-1} p_1 \dots q_m^{-1} p_m P) \cap X
\]
for every finite collection $p_i, q_i \in P$ and every subset $X$ of $P$.
Thus the set of constructible ideals is actually automatically closed under finite intersections.
We will write $\Bx, \By, \Bz$ etc.\ for the elements of $\J$.
For a set $X \subseteq P$ we will write $E_{[X]}$ for the projection on the subspace $\ell^2(X)$ of $\ell^2(P)$.

\begin{proposition}\label{P:proj ideal} \cite[Lemma 3.1]{Li12}
Let $P$ be a unital semigroup in a group $G$.
For elements $p_1, q_1, \dots, p_n, q_n \in P$ we have that
\[
E_{[p_1^{-1} q_1\dots p_n^{-1} q_n P]} = V_{p_1}^* V_{q_1} \cdots V_{p_n}^* V_{q_n} V_{q_n}^* V_{p_n} \cdots V_{q_1}^* V_{p_1} \in \ca_\la(P).
\]
If $p_1^{-1} q_1 \cdots p_n^{-1} q_n = e_G$ in $G$ then
\[
V_{p_1}^* V_{q_1} \cdots V_{p_n}^* V_{q_n} = E_{[q_n^{-1} p_n \dots q_1^{-1} p_1 P]}.
\] 
Moreover, we have that
\[
E_{[\Bx]} E_{[\Bx']} = E_{[\Bx \cap \Bx']}
\foral
\Bx, \Bx' \in \J.
\]
Consequently, we have that
\[
D_\la(P) := [\ca_\la(P)]_e = \ol{\spn}\{ E_{[\Bx]} \mid \Bx \in \J\}
\]
 for the fixed point algebra of $\ca_\la(P)$.
\end{proposition}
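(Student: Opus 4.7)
The plan is to reduce everything to two conjugation identities for the creation operators. For any $p \in P$ and any subset $X \subseteq P$, direct evaluation on the orthonormal basis $\{\delta_s\}_{s \in P}$ of $\ell^2(P)$ gives
\[
V_p V_p^* = E_{[pP]}, \qquad V_p^* E_{[X]} V_p = E_{[p^{-1} X]}, \qquad V_p E_{[X]} V_p^* = E_{[pX]},
\]
using the pre-image convention of \eqref{eqn:pX&pinvX}; these follow because $V_p^* \delta_s$ is $\delta_t$ when $s = pt$ and is zero otherwise. With these in hand, the first claim is proved by induction on $n$, peeling layers from the inside out: the innermost subexpression satisfies $V_{p_n}^* V_{q_n} V_{q_n}^* V_{p_n} = V_{p_n}^* E_{[q_n P]} V_{p_n} = E_{[p_n^{-1} q_n P]}$, and successively conjugating by $V_{q_{n-1}}(\cdot) V_{q_{n-1}}^*$ and then $V_{p_{n-1}}^*(\cdot) V_{p_{n-1}}$, etc., iteratively builds up the projection $E_{[p_1^{-1} q_1 \cdots p_n^{-1} q_n P]}$.

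For the second claim, write $W := V_{p_1}^* V_{q_1} \cdots V_{p_n}^* V_{q_n}$. Applying the first claim to the reversed word $q_n, p_n, \dots, q_1, p_1$ gives $W^* W = E_{[q_n^{-1} p_n \cdots q_1^{-1} p_1 P]}$, so $W$ is a partial isometry with the stated initial space. A direct induction on $k$, running from $n$ down to $1$, shows that for $s$ in this initial space the partial product $V_{p_k}^* V_{q_k} \cdots V_{p_n}^* V_{q_n} \delta_s$ equals $\delta_{t_k}$, where $t_k$ is the element of $P$ whose image in $G$ is $p_k^{-1} q_k \cdots p_n^{-1} q_n s$; in particular $W \delta_s = \delta_{g s}$ with $g = p_1^{-1} q_1 \cdots p_n^{-1} q_n$. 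When $g = e_G$ this reads $W \delta_s = \delta_s$ on the initial space, so $W = W^* W$ is exactly the asserted projection.

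The third claim is immediate from $E_{[X]} E_{[Y]} = E_{[X \cap Y]}$, valid for arbitrary subsets of $P$ by inspection on basis vectors. For the description of $D_\la(P)$, recall from the discussion preceding the proposition that $[\ca_\la(P)]_e$ is the closed linear span of monomials $V_{p_1}^* V_{q_1} \cdots V_{p_n}^* V_{q_n}$ whose $G$-degree is trivial. By the second claim each such monomial coincides with some $E_{[\Bx]}$ with $\Bx \in \J$, giving one inclusion; the reverse inclusion follows because the first claim exhibits every $E_{[\Bx]}$ as an element of $\ca_\la(P)$ that is fixed under $\ol{\de}$. The only non-mechanical step is the endpoint computation in the second claim: one must verify that the abstract cancellation $p_1^{-1} q_1 \cdots p_n^{-1} q_n = e$ in $G$ forces the concretely computed element $t_1 \in P$ to coincide with $s$, but this is exactly what the initial space condition $s \in q_n^{-1} p_n \cdots q_1^{-1} p_1 P$ guarantees, so the argument goes through.
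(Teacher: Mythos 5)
Your proof is correct. The paper does not prove this proposition but simply cites \cite[Lemma 3.1]{Li12}, and your argument is a complete, self-contained version of the standard computation behind that citation: the three conjugation identities $V_pV_p^* = E_{[pP]}$, $V_p^*E_{[X]}V_p = E_{[p^{-1}X]}$, $V_pE_{[X]}V_p^* = E_{[pX]}$ correctly implement the iterated set operations defining a constructible ideal (rather than the group product, which is the one subtlety here), and the partial-isometry argument for the second claim --- nonvanishing on the initial space forces every intermediate preimage to lie in $P$, after which $p_1^{-1}q_1\cdots p_n^{-1}q_n = e_G$ gives $W\de_s = \de_s$ --- is exactly right. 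The only cosmetic quibble is in your closing sentence: the initial-space condition guarantees that $W\de_s\neq 0$ (so the tracked element $t_1$ exists in $P$), while it is the group identity $g=e_G$ that forces $t_1=s$; both ingredients appear in your argument, so nothing is missing.
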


Notice that if a finite set $F \subseteq \J$ of constructible right ideals  is closed under intersection then 
\[
B_F := \spn\{E_{[\Bx]} \mid \Bx \in F\} 
\]
is a finite-dimensional, hence closed, $*$-subalgebra of the diagonal $D_\la(P) $. By saturating every finite subset under intersection, we see that $D_\la(P) $ is the inductive limit of the $ B_F $  over the set of  $\cap$-closed, finite subsets of $\J$ directed by inclusion.
Following \cite{Li12} we say that $\J$ is \emph{independent} when the following holds: 
\begin{center}
\textit{for all $\Bx, \Bx_1, \dots, \Bx_n \in \J$ with $\bigcup_{i=1}^n \Bx_i = \Bx$ there is an $i_0$ such that $\Bx = \Bx_{i_0}$.}
\end{center}
It then follows that $\J$ is independent if and only if $\{E_{[\Bx]}\}_{\Bx \in F}$ is a basis for $B_F$ \cite[Corollary 5.6.29]{CELY17}.

\subsection{Universal semigroup C*-algebras}

The \emph{isometric semigroup C*-algebra} $\ca_{\textup{iso}}(P)$ is the universal C*-algebra generated by isometries $\{\fv_p \mid p \in P\}$ satisfying $\fv_p \fv_q = \fv_{pq}$.
Universality implies that there exists a coaction of $G$ on $\ca_{\textup{iso}}(P)$ determined by $\fv_p \mapsto \fv_p \otimes u_p$ and having fixed point algebra
\[
[\ca_{\textup{iso}}(P)]_e = \ol{\spn} \{\fv_{p_1}^* \fv_{q_1} \cdots \fv_{p_n}^* \fv_{q_n} \mid p_1^{-1} q_1 \cdots p_n^{-1} q_n = e \}.
\]

The \emph{full semigroup C*-algebra} $\ca_{\textup{full}}(P)$ of $P$ is the universal C*-algebra generated by isometries $\{\mathcal{V}_p \mid p \in P\}$ and projections $\{\mathcal{E}_{\Bx} \mid \Bx \in \J\}$ satisfying the relations:
\begin{enumerate}
\item[I.] $\mathcal{V}_{p q} = \mathcal{V}_p \mathcal{V}_q$ and $\mathcal{V}_p \mathcal{E}_{\Bx} \mathcal{V}_p^* = \mathcal{E}_{p \Bx}$;
\item[II.] $\mathcal{E}_P = I$,  $\mathcal{E}_\mt = 0$; and $\mathcal{E}_{\Bx} \cdot \mathcal{E}_{\By} = \mathcal{E}_{\Bx \cap \By}$.
\end{enumerate}
Introduced in \cite{Li12}, $\ca_{\textup{full}}(P)$ offers a model of $\ca_\la(P)$ where the projections $\E_{\Bx}$ corresponding to the constructible ideals are regenerated in the obvious way, i.e.,
\[
\E_{\Bx} = \V_{p_1}^* \V_{q_1} \cdots \V_{p_n}^* \\V_{q_n} V_{q_n}^* \V_{p_n} \cdots \V_{q_1}^* \V_{p_1}
\qfor
\Bx = p_1^{-1} q_1 \dots p_n^{-1} q_n P.
\]
The full semigroup C*-algebra admits a coaction of $G$ whose fixed point algebra \emph{contains} the commutative C*-algebra
\[
D_{\textup{full}}(P) := \ca(\E_{\Bx} \mid \Bx \in \J).
\]
In \cite{Li12} it is shown that the canonical $*$-epimorphism $\ca_{\textup{full}}(P) \to \ca_\la(P)$ is faithful on $D_{\textup{full}}(P)$ if and only if $\J$ is independent.

The \emph{constructible semigroup C*-algebra} $\ca_s(P)$ of $P$ introduced in \cite[Definition 3.2]{Li12} is the universal C*-algebra generated by isometries $\{v_p \mid p \in P\}$ and projections $\{e_{\Bx} \mid \Bx \in \J\}$ satisfying the relations:
\begin{enumerate}
\item[I.] $v_{p q} = v_p v_q$;
\item[II.] $e_\mt = 0$;
\item[III${}_G$.] if $p_1, q_1, \dots, p_n, q_n$ satisfy $p_1^{-1} q_1 \cdots p_n^{-1} q_n = e_G$ then 
\[
v_{p_1}^* v_{q_1} \cdots v_{p_n}^* v_{q_n} = e_{\Bx} \qfor \Bx = q_n^{-1} p_n \dots q_1^{-1} p_1 P.
\]
\end{enumerate}
It follows by \cite[Lemma 3.3]{Li12} that the family $\{v_p, e_{\Bx} \mid p \in P, \Bx \in \J\}$ satisfies also the relations
\[
e_P = 1, \, v_p e_{\Bx} v_p^* = e_{p \Bx}, \, \text{and} \, e_{\Bx} \cdot e_{\By} = e_{\Bx \cap \By}.
\]
Therefore $\ca_s(P)$ is a quotient of $\ca_{\textup{full}}(P)$.
Universality implies that there exists a coaction of $G$ on $\ca_s(P)$ that induces a conditional expectation
\[
E \colon \ca_s(P) \to \ol{\spn}\{ v_{p_1}^* v_{q_1} \cdots v_{p_n}^* v_{q_n} \mid p_1^{-1} q_1 \cdots p_n^{-1} q_n = e\}.
\]
Hence the projections $e_{\Bx}$ have dense linear span $[\ca_s(P)]_e$.
We will write
\[
D_s(P) : = \ca( e_{\Bx} \mid \Bx \in \J ) = [\ca_s(P)]_e.
\]
In particular the fixed point algebra $[\ca_s(P)]_e$ is the inductive limit of the (finite dimensional and thus closed) C*-subalgebras
\[
\B_F := \spn\{ e_{\Bx} \mid \Bx \in F\}
\textup{ for finite $\cap$-closed }
F \subseteq \J.
\]
If $\J$ is independent then the canonical $*$-epimorphism $\ca_s(P) \to \ca_\la(P)$ is faithful on $D_s(P)$, by \cite[Corollary 2.27]{Li12}.
We will prove the converse of that, by using a result of Norling \cite{Nor14}.

Towards this end we need to make a connection with inverse semigroups.
Recall  that if $\S$ is an inverse semigroup then the reduced C*-algebra  $\ca_\la(\S)$ is the C*-algebra generated by the operators
$
\la(s) \colon \ell^2(\S \setminus \{0\}) \to \ell^2(\S \setminus \{0\}), 
$
determined by  
\[
\la(s) \de_x =
\begin{cases}
 \de_{sx} & \text{ if } s^*s \geq xx^*,\\ 0 &\text{otherwise.}
\end{cases}
\]
There is also a universal C*-algebra $\ca(\S)$ generated by a universal representation $\{u_s: s\in \S\}$ of $\S$ by partial isometries.
The fastest way to obtain an inverse semigroup from our $P$ is to use its left regular representation $V$ and define
\[
\I_V := \{V_{p_1}^* V_{q_1} \cdots V_{p_n} V_{q_n}^* \mid n \in \bZ_+; p_i, q_i \in P, 1 \leq i \leq n\}.
\]
Then $\I_V$ is an inverse semigroup (of partial isometries on $\ell^2(P)$), so we have two C*-algebras
\[
\ca(\I_V) = \ca(u(V) \mid V \in \I_V)
\qand
\ca_\la(\I_V) = \ca(\la(V) \mid V \in \I_V).
\]
It was shown in \cite{Li13} that under the assumption that $P$ satisfies the Toeplitz condition from \cite[Definition 5.8.1]{Li17}, there is a canonical isomorphism $\ca_s(P) \cong \ca(\I_V)$. 
We remove this assumption next.

\begin{theorem}\label{T:inv sgrp}
Let $P$ be a unital subsemigroup in a group $G$ and let $\I_V$ be the induced inverse semigroup in $\ca_\la(P)$.
Then there exists a canonical $*$-isomorphism
\[
\ca_s(P) \to \ca(\I_V) ; v_p \mapsto u(V_p)
\]
that restricts to an isomorphism of  $D_s(P)$ to the C*-subalgebra $\ca(E)$ of $\ca(\I_V)$ generated by the semilattice of idempotents in $\I_V$.
\end{theorem}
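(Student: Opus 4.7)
The plan is to construct mutually inverse $*$-homomorphisms $\phi \colon \ca_s(P) \to \ca(\I_V)$ and $\psi \colon \ca(\I_V) \to \ca_s(P)$ by appealing to universal properties, with the key step being the well-definedness of $\psi$, which we handle via the canonical surjection $\pi_\la \colon \ca_s(P) \to \ca_\la(P)$ that sends $v_p \mapsto V_p$.

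I would first produce $\phi$ from the universal property of $\ca_s(P)$. The elements $\{u(V_p)\}_{p \in P} \subseteq \ca(\I_V)$ are isometries because $V_p^* V_p = I$ in $\I_V$, and the homomorphism property of $u$ yields relations I and II. For III${}_G$, if $p_1^{-1} q_1 \cdots p_n^{-1} q_n = e_G$, Proposition \ref{P:proj ideal} gives $V_{p_1}^* V_{q_1} \cdots V_{p_n}^* V_{q_n} = E_{[\Bx]}$ in $\I_V$ with $\Bx = q_n^{-1} p_n \cdots q_1^{-1} p_1 P$, and applying $u$ produces the required identity. Thus $\phi$ is a well-defined $*$-homomorphism with $\phi(v_p) = u(V_p)$ and $\phi(e_\Bx) = u(E_{[\Bx]})$, and it is surjective since $\I_V$ is generated by $\{V_p, V_p^*\}$.

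For the reverse direction, I would attempt to define $\psi$ via the universal property of $\ca(\I_V)$ by setting $\psi'(s) := v_{p_1}^* v_{q_1} \cdots v_{p_n}^* v_{q_n}$ on each word presentation $s = V_{p_1}^* V_{q_1} \cdots V_{p_n}^* V_{q_n} \in \I_V$. The crucial point is well-definedness: if $w_1, w_2 \in \ca_s(P)$ correspond to two presentations of the same $s$, then each $w_i^* w_j$ is a word of $G$-degree $e$, and relation III${}_G$ gives $w_i^* w_j = e_{\Bx_{ij}}$ for a specific $\Bx_{ij} \in \J$. Applying $\pi_\la$, we find $\pi_\la(w_i^* w_j) = s^* s = E_{[D]}$ where $D$ is the source ideal of $s$, while $\pi_\la(e_{\Bx_{ij}}) = E_{[\Bx_{ij}]}$; since distinct constructible ideals give distinct projections in $D_\la(P)$, we conclude $\Bx_{ij} = D$ for every $i, j$, whence $w_i^* w_j = e_D$ in $\ca_s(P)$. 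Expanding $(w_1 - w_2)^*(w_1 - w_2) = 0$ then forces $w_1 = w_2$. Multiplicativity and involutivity of $\psi'$ follow from concatenation and reversal of words, while $\psi'(0) = 0$ is contained in the same analysis applied to $s^* s = 0$ (forcing $D = \mt$ and $e_D = 0$). Universality of $\ca(\I_V)$ yields $\psi$, and $\psi \phi = \id$ and $\phi \psi = \id$ hold on generators, hence on the whole algebras by density.

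The restriction statement now follows at once: $\phi$ maps the spanning set $\{e_\Bx\}_{\Bx \in \J}$ of $D_s(P)$ bijectively onto $\{u(E_{[\Bx]})\}_{\Bx \in \J}$, which spans $\ca(E)$, and as $\phi$ is an isomorphism so is its restriction. The central obstacle is the well-definedness of $\psi$, which in Li's earlier argument \cite{Li13} required the Toeplitz condition on $P$; here we bypass this assumption by passing through $\ca_\la(P)$ and exploiting only the injectivity of the semilattice embedding $\J \to D_\la(P)$, $\Bx \mapsto E_{[\Bx]}$, which holds unconditionally.
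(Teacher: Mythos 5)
Your forward map and your overall strategy for the inverse coincide with the paper's, and most of the details are right, but there is one unjustified assertion sitting exactly at the crux of the well-definedness argument: you claim that for two word presentations $w_1, w_2 \in \ca_s(P)$ of the same $s \in \I_V$, ``each $w_i^* w_j$ is a word of $G$-degree $e$.'' For $i = j$ this is automatic, but for $i \neq j$ it amounts to the claim that the two presentations
\[
s \;=\; V_{p_1}^* V_{q_1} \cdots V_{p_n}^* V_{q_n} \;=\; V_{r_1}^* V_{s_1} \cdots V_{r_m}^* V_{s_m}
\]
satisfy $p_1^{-1} q_1 \cdots p_n^{-1} q_n = r_1^{-1} s_1 \cdots r_m^{-1} s_m$ in $G$. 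A priori nothing prevents two words with different group degrees from yielding the same partial isometry on $\ell^2(P)$, and this is precisely the point where the difficulty of the theorem is concentrated (and where \cite{Li13} invoked the Toeplitz condition). Without it, relation III${}_G$ does not apply to the cross terms $w_1^* w_2$ and $w_2^* w_1$, so they need not equal any $e_{\Bx}$, and your expansion of $(w_1 - w_2)^*(w_1 - w_2)$ cannot proceed. Note also that the tool you advertise as sufficient --- injectivity of $\Bx \mapsto E_{[\Bx]}$ --- does not by itself yield this degree statement.

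The gap is genuine but fillable, and the paper fills it by evaluating on basis vectors: if $s \neq 0$ then $s^*s = E_{[D]}$ for a nonempty constructible ideal $D$, and for $t \in D$ one computes $s \de_t = \de_{g_1 t} = \de_{g_2 t}$, where $g_1, g_2$ are the group degrees of the two presentations; hence $g_1 = g_2$ in $G$. (The paper phrases this via the identity $VW^* \de_t = \de_t$ and extracts at the same time the alternative descriptions of $D$ needed to see that all four products $w_i^* w_j$ equal the \emph{same} $e_{D}$.) Once this observation is inserted, your argument is complete and is essentially the paper's: your conclusion via $(w_1-w_2)^*(w_1-w_2)=0$ versus the paper's $v = v(v^*v) = v(w^*w) = (vw^*)w = (ww^*)w = w$ is a cosmetic difference, and your remaining steps --- the forward map from universality, the treatment of the zero element, and the identification of $D_s(P)$ with $\ca(E)$ via Proposition \ref{P:proj ideal} --- are correct and match the paper.
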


\begin{proof}
Since the universal representation $u$ of $\I_V$ is multiplicative and unital, the composition 
\[
p \in P \mapsto V_p \in \I_V \mapsto u(V_p) \in \ca(\I_V)
\]
is clearly an isometric representation of $P$ in $\ca(\I_V)$.
Recall now that if $p_1^{-1} q_1 \cdots p_n^{-1} q_n = e_G$ and $\Bx = q_n^{-1} p_n \dots q_1^{-1} p_1 P$, then the relation 
\[
E_{[\Bx]} = V_{p_1}^* V_{q_1} \cdots V_{p_n}^* V_{q_n},
\]
holds in $\ca_\la(P)$. 
Thus $E_{[\Bx]} \in \I_V$ and the composition map
\[
\Bx \in \J \mapsto E_{[\Bx]} \in \I_V \mapsto u(E_{[\Bx]}) \in \ca(\I_V)
\]
satisfies 
\[
u(E_{[\Bx]}) = u(V_{p_1}^* V_{q_1} \cdots V_{p_n}^* V_{q_n}) = u(V_{p_1})^* u(V_{q_1}) \cdots u(V_{p_n})^* u(V_{q_n}).
\]
Thus the families $\{u(V_p) \mid p \in P\}$ and $\{u(E_{[\Bx]}) \mid \Bx \in \J\}$ satisfy the relations defining $\ca_s(P)$, and by the universal property there is a $*$-homomorphism $\ca_s(P) \to \ca(\I_V)$ mapping $v_p$ to $u(V_p)$.

For the other direction we want to show that the assignment
\begin{equation}\label{eqn:welldefindedness}
V_{p_1}^*V_{q_1} \cdots V_{p_n}^* V_{q_n} \in \I_V \mapsto v_{p_1}^* v_{q_1} \cdots v_{p_n}^* v_{q_n} \in \ca_s(P)
\end{equation}
is a well-defined representation of $\I_V$ by partial isometries.
To this end let us set
\[
V := V_{p_1}^* V_{q_1} \cdots V_{p_n}^* V_{q_n}
\qand
W := V_{r_1}^* V_{s_1} \cdots V_{r_m}^* V_{s_m}
\]
and suppose that $V = W$.
Hence we also have that $V V^* = W W^* = W V^* = V W^*$, 
and so
\[
q_n^{-1} p_n \dots q_1^{-1} p_1 P = s_m^{-1} r_m \dots s_1^{-1} r_1 P =: \Bx.
\]
For $V \neq 0$ we have that $\Bx \neq \mt$, and in particular we get that
\[
V W^* \de_t = V V^* \de_t = E_{[\Bx]} \de_t = \de_t \foral t \in \Bx.
\]
For any such $t \in \Bx$ we deduce that
\[
q_n^{-1} p_n \cdots q_1^{-1} p_1 s_m^{-1} r_m \cdots s_1^{-1} r_1 t = t,
\]
which yields $q_n^{-1} p_n \cdots q_1^{-1} p_1 s_m^{-1} r_m \cdots s_1^{-1} r_1 = e_G$.
Consequently we obtain another description for $\Bx$ by using the left regular representation.
That is, the relation
\[
E_{[\Bx]}
= 
VV^*
=
V W^*
=
E_{[q_n^{-1} p_n \cdots q_1^{-1} p_1 s_m^{-1} r_m \cdots s_1^{-1} r_1 P]}
\]
yields
\[
\Bx = q_n^{-1} p_n \cdots q_1^{-1} p_1 s_m^{-1} r_m \cdots s_1^{-1} r_1 P.
\]
Likewise we obtain similar expressions for $\Bx$ corresponding to the equations $E_{[\Bx]} = WV^* = WW^*$.
Let us set
\[
v := v_{p_1}^* v_{q_1} \cdots v_{p_n}^* v_{q_n}
\qand
w := v_{r_1}^* v_{s_1} \cdots v_{r_m}^* v_{s_m}.
\]
Then by the properties of $\ca_s(P)$ and the descriptions for $\Bx$ obtained above we get
\[
e_{\Bx} = v v^* = w w^* = w v^* = v w^*.
\]
This shows that $v$ and $w$ are partial isometries.
By considering the dual equalities
\[
V^* V = W^* W = V^* W = W^* V,
\]
we have the symmetrical
\[
v^* v = w^* w = v^* w = w^* v.
\] 
Since this implies
\[
v = v (v^* v) = v (w^*w) = (v w^*) w = (w w^*) w = w,
\]
the map given in \eqref{eqn:welldefindedness} is well defined and determines a $*$-epimorphism $\ca(\I_V) \to \ca_s(P)$.
Choosing $p_1=e_G$ and $q_1=p$ shows that this epimorphism is indeed the inverse of the $*$-homomorphism $ \ca_s(P) \to \ca(\I_V) $ constructed above, and the proof of the isomorphism $\ca_s(P) \simeq \ca(\I_V)$ is complete.

Finally, notice that by Proposition \ref{P:proj ideal}, the semilattice $E$ of idempotents in $\I_V$ coincides with the set of characteristic functions of constructible right ideals viewed as operators on $\ell^2(P)$.
Therefore $\ca(E) \subseteq \ca(\I_V)$ is the image of $D_s(P) \subseteq \ca_s(P)$ in the isomorphism.
\end{proof}

\begin{corollary}\label{C:nor-li} {\em (cf. \cite[Theorem 3.22]{Nor14} and \cite[Corollary 2.27]{Li12})} 
Let $P$ be a unital subsemigroup in a group $G$ and consider the commuting diagram
\[
\xymatrix{
\ca_{\textup{full}}(P) \ar[d] \ar[drr]^{\la_{\textup{full}}} & & \\
\ca_s(P) \ar[rr]^{\la_s} & & \ca_\la(P)
}
\]
of canonical $*$-epimorphisms.
Then the following are equivalent:
\begin{enumerate}
\item $\J$ is independent.
\item $\la_{\textup{full}}|_{D_{\textup{full}}(P)} \colon D_{\textup{full}}(P) \to D_\la(P)$ is faithful.
\item $\la_s|_{D_s(P)} \colon D_s(P) \to D_\la(P)$ is faithful.
\end{enumerate}
\end{corollary}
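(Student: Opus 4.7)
The plan is to prove the cycle (i) $\Rightarrow$ (ii) $\Rightarrow$ (iii) $\Rightarrow$ (i). Since the equivalence (i) $\Leftrightarrow$ (ii) is already established by Li in \cite{Li12} and recalled in the discussion of $\ca_{\textup{full}}(P)$ preceding this corollary, the substantive work lies in (ii) $\Rightarrow$ (iii) and (iii) $\Rightarrow$ (i).

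For (ii) $\Rightarrow$ (iii) I would use the canonical quotient $q \colon \ca_{\textup{full}}(P) \to \ca_s(P)$ sending $\V_p \mapsto v_p$ and $\E_{\Bx} \mapsto e_{\Bx}$. This map carries $D_{\textup{full}}(P)$ onto $D_s(P)$ and satisfies the factorisation $\la_{\textup{full}} = \la_s \circ q$. Faithfulness of $\la_{\textup{full}}|_{D_{\textup{full}}(P)}$ therefore forces both $q|_{D_{\textup{full}}(P)}$ (which then becomes a $*$-isomorphism onto $D_s(P)$) and $\la_s|_{D_s(P)}$ to be faithful, giving (iii).

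The crux is (iii) $\Rightarrow$ (i), where I would pass to the inverse-semigroup framework through Theorem \ref{T:inv sgrp}. Under the $*$-isomorphism $\ca_s(P) \simeq \ca(\I_V)$ from that theorem, the diagonal $D_s(P)$ corresponds to the universal C*-algebra $\ca(E)$ of the semilattice $E = \{E_{[\Bx]} \mid \Bx \in \J\}$ of idempotents of $\I_V$, and $\la_s|_{D_s(P)}$ corresponds to the canonical $*$-homomorphism $\ca(E) \to \ca_\la(E) \subseteq \ca_\la(\I_V) \subseteq \B(\ell^2(P))$. By \cite[Theorem 3.22]{Nor14}, this canonical map is faithful precisely when the semilattice $E$ satisfies the independence condition that whenever $e \in E$ equals a finite join of elements $e_1, \dots, e_n \in E$ below it, then $e = e_i$ for some $i$. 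The relation $E_{[\Bx]} E_{[\By]} = E_{[\Bx \cap \By]}$ from Proposition \ref{P:proj ideal} identifies $(E, \cdot)$ with $(\J, \cap)$, so Norling's independence of $E$ translates directly into the independence of $\J$ in the sense used in the paper.

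The main obstacle is to verify carefully that Norling's semilattice-theoretic independence matches the definition of independence of $\J$ verbatim. Once the dictionary $\Bx \leftrightarrow E_{[\Bx]}$ is in place, with intersections in $\J$ corresponding to products in $E$ and coverings in $E$ corresponding to the covers $\bigcup_{i} \Bx_i = \Bx$ in $\J$, this translation is routine; the remainder of the argument is just an orchestration of the quoted result of Li, the factorisation through $q$, and the inverse-semigroup identification of Theorem \ref{T:inv sgrp}.
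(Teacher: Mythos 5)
Your proposal is correct and follows essentially the same route as the paper: (i)$\Leftrightarrow$(ii) is quoted from Li, (ii)$\Rightarrow$(iii) comes from restricting the factorisation $\la_{\textup{full}} = \la_s \circ q$ to the diagonals, and (iii)$\Rightarrow$(i) passes through the identification $D_s(P) \simeq \ca(E)$ of Theorem \ref{T:inv sgrp} and Norling's criterion for faithfulness of the left regular representation of the semilattice of constructible ideals. The only cosmetic difference is that the paper cites Norling's Proposition 3.5 and Corollary 3.6 (with a pointer to the proof of his Theorem 3.22) rather than Theorem 3.22 itself for the translation between semilattice independence and independence of $\J$.
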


\begin{proof}
The equivalence of items (i) and (ii) is precisely \cite[Corollary 2.27]{Li12}. 
Since the diagram restricts to a commuting diagram of diagonal algebras, it is clear that (ii) implies (iii).  
In order to prove that (iii) implies (i), assume now that (iii) holds and observe that since $D_s(P) \simeq \ca(E)$ by Theorem \ref{T:inv sgrp}, the canonical $*$-epimorphism $\ca(E) \to D_\la(P)$ is faithful. 
Then \cite[Proposition 3.5 and Corollary 3.6]{Nor14} imply that (i) holds, see also the proof of \cite[Theorem 3.22]{Nor14}.
\end{proof}

\subsection{Sehnem's strong covariance relations}

Working at the generality of product systems, Sehnem \cite{Seh18} has provided a quotient of the Toeplitz C*-algebra with the following properties: (a) it admits an injective copy of the coefficient algebra; and (b) every equivariant $*$-representation of the quotient that is injective on the coefficient algebra is automatically faithful on the fixed point algebra.
Let us review her construction for a unital semigroup $P$ in a group $G$.
This corresponds to the product system $X$ over $P$ such that every fiber $X_p$ is $\bC$.
For a finite set $F \subseteq G$ set
\[
K_F := \bigcap_{g \in F} gP.
\]
For $r \in P$ and $g \in F$ define
\[
I_{r^{-1} K_{\{r,g\}}} :=
\begin{cases}
(0) & \text{if } K_{\{r,g\}} \neq \mt \text{ and } r \notin K_{\{r,g\}},\\
\bC & \text{otherwise},
\end{cases}
\qand
I_{r^{-1} (r \vee F)} := \bigcap_{g \in F} I_{r^{-1} K_{\{r,g\}}}.
\]
Let the spaces
\[
X_F := \sca{\de_r \in \ell^2(P) \mid I_{r^{-1} (r \vee F)} \neq (0) }
\qand
X_F^+ := \bigoplus_{g \in G} X_{gF} \subseteq \ell^2(P) \otimes \ell^2(G).
\]
For every $p \in P$ define the operator
\[
V_{F,p} \colon X_F^+ \to X_F^+ ; X_F \ni \de_r \mapsto \de_{pr} \in X_{p F}.
\]
It is well-defined as $I_{r^{-1}(r \vee F)} = I_{(pr)^{-1}(pr \vee pF)}$ for all $r \in P$, and $I_{r^{-1}(r \vee F)} = I_{(s^{-1}r)^{-1}(s^{-1}r \vee s^{-1}F)}$ for all $r \in sP$.
It follows that $V_{F,p}$ is an isometry and thus we obtain a $*$-representation
\[
\Phi_F \colon \ca_{\textup{iso}}(P) \to \B(X_F^+) ; \fv_p \mapsto V_{F,p}.
\]
For the projections 
\[
Q_{g,F} \colon X_F^+ \to X_{gF}
\]
we get that
\[
V_{F,p} Q_{g, F} = Q_{pg, F} V_{F,p}
\qand
V_{F,p}^* Q_{g,F} = Q_{p^{-1}g, F} V_{F,p}^*.
\]
Therefore $Q_{e, F}$ is reducing for $\Phi_F([\ca_{\textup{iso}}(P)]_e)$.
For $f \in [\ca_{\textup{iso}}(P)]_e$ define
\[
\nor{f}_F := \nor{Q_{e,F} \Phi_F(f) Q_{e,F}} = \nor{\Phi_F(f) Q_{e,F}} = \nor{Q_{e,F} \Phi_F(f)}.
\]
When we specialize the definition of strongly covariant representations from \cite{Seh18} to the case of 
semigroup algebras, which are obtained from product systems with one-dimensional fibers, we get the following.

\begin{definition} (cf. \cite[Definition 3.2]{Seh18})
Let $P$ be a unital subsemigroup of a group $G$.
An isometric representation of $P$ is called \emph{strongly covariant} if it vanishes on the ideal $\I_e \lhd [\ca_{\textup{iso}}(P)]_e$ given by
\[
\I_e := \{f \in [\ca_{\textup{iso}}(P)]_e \mid \lim_F \nor{f}_F = 0\},
\]
where the limit is taken with respect to the partial order induced by inclusion on finite sets $F$ of $G$.
We write $\I_\infty$ for the ideal of \emph{strong covariance relations} generated by $\I_e$.
We denote by $\ca_{\scv}(P)$ the universal C*-algebra with respect to the strong covariant representations of $X$.
\end{definition}

Since $\ca_{\scv}(P)$ is a quotient of $\ca_{\textup{iso}}(P)$ by an induced ideal, it inherits a faithful coaction $\ca_{\scv}(P) \to \ca_{\scv}(P) \otimes \ca(G)$.
Moreover since $\ca_{\scv}(P)$ is a unital $*$-representation of $\ca_{\textup{iso}}(P)$, we get that $\ca_{\scv}(P) \neq \{0\}$.
An important point of Sehnem's theory is that the image $v_p \in \ca_{\scv}(P)$ of each $p \in P$ is an isometry, and thus non-zero.
The next observation follows from \cite[Theorem 3.10]{Seh18}.

\begin{theorem}\label{T:inj ex}
Let $P$ be a unital semigroup in a group $G$.
Then a unital $*$-representation of $\ca_{\scv}(P)$ that admits a coaction of $G$ is automatically faithful on $[\ca_{\scv}(P)]_e$.
\end{theorem}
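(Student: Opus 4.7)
The plan is to obtain Theorem \ref{T:inj ex} by specializing Sehnem's injectivity result \cite[Theorem 3.10]{Seh18} to the trivial product system $X$ over $P$ with fibers $X_p = \bC$. In Sehnem's setup, an equivariant $*$-representation of the strong covariance quotient is faithful on the fixed point algebra whenever it is injective on the coefficient algebra $X_e$; in our setting $X_e = \bC$, so any \emph{unital} $*$-representation satisfies this hypothesis for free.

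The first step is to reconcile the definitions preceding the statement with Sehnem's framework. The spaces $X_F$ and $X_F^+$, the isometries $V_{F,p}$, the projections $Q_{g,F}$, and the seminorms $\nor{f}_F = \nor{Q_{e,F}\Phi_F(f)Q_{e,F}}$ are precisely the one-dimensional specialization of Sehnem's construction applied to the trivial product system. Consequently the ideal $\I_e$ and its saturation $\I_\infty$ match hers, $\ca_{\scv}(P)$ is identified with Sehnem's strong covariance quotient of the Fock C*-algebra of $X$, and the canonical coaction of $G$ coming from $\ca_{\textup{iso}}(P)$ agrees with her topological grading.

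Once this identification is in place, a unital $*$-representation $\pi \colon \ca_{\scv}(P) \to B$ that admits a coaction of $G$ is, by definition, equivariant with respect to the gradings, and its restriction to the coefficient algebra $\bC \subseteq [\ca_{\scv}(P)]_e$ is automatically injective because $\pi$ is unital. Sehnem's theorem then applies verbatim and gives that $\pi$ is faithful on $[\ca_{\scv}(P)]_e$. The main obstacle is purely the translation between her product system language and our semigroup setting; the hard content—realizing the norm on $[\ca_{\scv}(P)]_e$ as $\lim_F \nor{f}_F$ via the Fock-type model $\bigoplus_F \Phi_F$ and using the coaction to promote injectivity on the one-dimensional coefficient algebra to faithfulness on the entire fixed point algebra—is carried out inside Sehnem's proof, and our role reduces to verifying that her hypotheses hold.
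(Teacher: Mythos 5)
Your proposal is correct and is essentially identical to the paper's own argument: both invoke Sehnem's \cite[Theorem 3.10]{Seh18} for the product system over $P$ with one-dimensional fibers, observing that the coefficient algebra is $\bC$, so unitality of the representation automatically gives injectivity on it and the faithfulness on the fixed point algebra follows.
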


\begin{proof}
The conclusion follows by the property (C3) of $\ca_{\scv}(P)$ by \cite[Theorem 3.10]{Seh18}.
We note here that the coefficient algebra of the product system in this setting is $\bC$, and that trivially it embedded in $\ca_{\scv}(P)$.
\end{proof}

We see that every $\Phi_F$ is actually a restriction (and thus a sub-representation) of the ``reduced'' coaction of $\ca_\la(G)$ on $\ca_\la(P)$.
Since $Q_{e, F}$ is reducing for $\Phi_F([\ca_{\textup{iso}}(P)]_e)$ we obtain the maps
\begin{equation}\label{Eq:dia}
\xymatrix{
[\ca_{\textup{iso}}(P)]_e \ar[r]^{\rho} &
[\ca_s(P)]_e \ar[r]^{\la} &
[\ca_\la(P)]_e \ar[r]^{\Phi:= \bigoplus\limits_{\textup{fin } F \subseteq G} \Phi_F|_{X_F}}
&
\prod\limits_{\text{fin } F \subseteq G} \B(X_F) \ar[d]^{q_{\qd}} \\
& & &
\quo{\prod\limits_{\text{fin } F \subseteq G} \B(X_F)}{c_0(\B(X_F) \mid {\text{fin } F \subseteq G})}.
}
\end{equation}
Thus by definition it follows that $\I_\infty = \sca{\ker(q_{\qd} \Phi \la \rho) \cap [\ca_{\textup{iso}}(P)]_e}$.
Therefore every $e$-graded relation in $\ca_s(P)$ passes to the strong covariance algebra.
In particular we have the following proposition.

\begin{proposition} \label{P:scv ex}
Let $P$ be a unital subsemigroup in a group $G$.
Then the canonical $*$-epimorphism $\ca_{\textup{iso}}(P) \to \ca_{\scv}(P)$ factors through $\ca_s(P)$.
\end{proposition}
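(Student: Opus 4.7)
The plan is to compare the two ideals $\ker \rho \lhd \ca_{\textup{iso}}(P)$ and $\I_\infty \lhd \ca_{\textup{iso}}(P)$, where $\rho \colon \ca_{\textup{iso}}(P) \to \ca_s(P)$ denotes the canonical $*$-epimorphism, and to show directly that $\ker \rho \subseteq \I_\infty$. Once this containment is established, the universal property produces a $*$-epimorphism $\ca_s(P) \to \ca_{\scv}(P)$ through which the quotient map to $\ca_{\scv}(P)$ factors.

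First, I would identify an explicit generating set for $\ker \rho$ coming from the defining relations of $\ca_s(P)$. Note that relation~(I) is already satisfied in $\ca_{\textup{iso}}(P)$, so $\ker \rho$ is the closed two-sided ideal generated by the elements
\[
\fv_{p_1}^* \fv_{q_1} \cdots \fv_{p_n}^* \fv_{q_n} - \fv_{r_1}^* \fv_{s_1} \cdots \fv_{r_m}^* \fv_{s_m}
\]
for which $p_1^{-1} q_1 \cdots p_n^{-1} q_n = e_G = r_1^{-1} s_1 \cdots r_m^{-1} s_m$ and $q_n^{-1} p_n \cdots q_1^{-1} p_1 P = s_m^{-1} r_m \cdots s_1^{-1} r_1 P$ (the well-definedness of $e_{\Bx}$ from relation~III${}_G$), together with the elements $\fv_{p_1}^* \fv_{q_1} \cdots \fv_{p_n}^* \fv_{q_n}$ for which $p_1^{-1} q_1 \cdots p_n^{-1} q_n = e_G$ and $q_n^{-1} p_n \cdots q_1^{-1} p_1 P = \mt$ (the combined effect of relations II and III${}_G$).

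The crucial observation is that \emph{all} of these generators lie in the fixed point algebra $[\ca_{\textup{iso}}(P)]_e$, because the defining condition $p_1^{-1} q_1 \cdots p_n^{-1} q_n = e_G$ forces each such word to be $e$-graded under the coaction $\fv_p \mapsto \fv_p \otimes u_p$. Consequently $\ker \rho$ is generated, as a closed two-sided ideal, by elements of $\ker \rho \cap [\ca_{\textup{iso}}(P)]_e$; in particular
\[
\ker \rho \ = \ \sca{\ker \rho \cap [\ca_{\textup{iso}}(P)]_e}.
\]

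To conclude, I would use the diagram~\eqref{Eq:dia}: if $f \in \ker \rho \cap [\ca_{\textup{iso}}(P)]_e$ then $\rho(f) = 0$, hence $q_{\qd} \Phi \la \rho(f) = 0$, so
\[
\ker \rho \cap [\ca_{\textup{iso}}(P)]_e \ \subseteq \ \ker(q_{\qd} \Phi \la \rho) \cap [\ca_{\textup{iso}}(P)]_e.
\]
Combined with the identification $\I_\infty = \sca{\ker(q_{\qd} \Phi \la \rho) \cap [\ca_{\textup{iso}}(P)]_e}$ recorded just before the statement, this yields
\[
\ker \rho \ = \ \sca{\ker \rho \cap [\ca_{\textup{iso}}(P)]_e} \ \subseteq \ \I_\infty,
\]
so the quotient $\ca_{\textup{iso}}(P) \to \ca_{\scv}(P) = \ca_{\textup{iso}}(P)/\I_\infty$ indeed factors through $\ca_s(P) = \ca_{\textup{iso}}(P)/\ker \rho$. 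The main conceptual point — really the only subtlety — is verifying that the kernel of $\rho$ is an induced ideal; once one writes the defining relations of $\ca_s(P)$ and sees that they all sit in the $e$-fibre, everything else follows from the identification of $\I_\infty$ noted in~\eqref{Eq:dia}.
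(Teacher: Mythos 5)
Your argument is correct, and the engine driving it is the same one the paper uses: the diagram \eqref{Eq:dia} together with the identification $\I_\infty = \sca{\ker(q_{\qd}\Phi\la\rho)\cap[\ca_{\textup{iso}}(P)]_e}$. The difference is in where the bookkeeping happens. The paper works downstairs in $\ca_{\scv}(P)$: it defines candidate projections $\dot{\fe}_{\Bx}$ as the ``doubled'' words in the $\dot{\fv}_p$, and checks element by element (each time via $\la\rho(\ff)=\la\rho(\fg)\Rightarrow \ff-\fg\in\I_e$) that these are well defined projections satisfying axioms I, II and III${}_G$, before invoking the universal property of $\ca_s(P)$. You work upstairs in $\ca_{\textup{iso}}(P)$ and prove the ideal containment $\ker\rho\subseteq\I_\infty$ in one stroke, which gives the factorization immediately. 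The one step your route needs that the paper's does not is the identification of $\ker\rho$ as the closed two-sided ideal generated by your listed $e$-graded elements. This is correct --- and it is the same observation the paper itself relies on later when it asserts that $\ca_s(P)$ is the quotient of $\ca_{\textup{iso}}(P)$ by an induced ideal --- but it merits a line of justification: one should note that $\rho$ is surjective because each $e_{\Bx}$ equals the doubled word $v_{p_1}^*v_{q_1}\cdots v_{q_n}v_{q_n}^*\cdots v_{p_1}$ by III${}_G$ and \cite[Lemma 3.3]{Li12}, and that consequently III${}_G$ (together with II) is equivalent to the two families of relations you wrote down, so that the quotient of $\ca_{\textup{iso}}(P)$ by that ideal really does satisfy the defining axioms of $\ca_s(P)$. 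A minor aesthetic gain of your version is that you only need $\rho(f)=0$ rather than $\la\rho(f)=0$ to land in $\ker(q_{\qd}\Phi\la\rho)$; both suffice since $\ker\rho\subseteq\ker(\la\rho)$.
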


\begin{proof}
For convenience let us set $\ca_{\scv}(P) := \ca(\dot{\fv}_p \mid p \in P)$.
It is clear that the family $\{\dot{\fv}_p \mid p \in P\}$ is an isometric representation of $P$.
We have to show that there is a family of projections
\[
\{\dot{\fe}_{\Bx} \mid \Bx \in \J\} \subseteq \ca_{\scv}(P)
\]
which, along with the family of isometries $\{\dot{\fv}_p \mid p \in P\}$, satisfies the $e$-graded axioms of $\ca_s(P)$.
Recall that the projections in $\ca_s(P)$ can be recreated from the isometries, i.e., 
\[
e_{\Bx} = v_{p_1}^* v_{q_1} \cdots v_{p_n}^* v_ {q_n} v_{q_n}^* v_{p_n} \cdots v_{q_1}^* v_{p_1}
\qfor
\Bx = p_1^{-1} q_1 \dots p_n^{-1} q_n P.
\]
We are going to use this as a model.
Fix $p_1, q_1, \dots, p_m, q_m \in P$ and set 
\[
\ff := \fv_{p_1}^* \fv_{q_1} \cdots \fv_{p_n}^* \fv_ {q_n} \fv_{q_n}^* \fv_{p_n} \cdots \fv_{q_1}^* \fv_{p_1}.
\]
Clearly $\ff$ is selfadjoint.
Using the diagram (\ref{Eq:dia}) preceding the statement we see that
\[
\la \rho(\ff) = E_{[\Bx]} \qfor \Bx = p_1^{-1} q_1 \dots p_n^{-1} q_n P.
\]
As projections in C*-algebras are defined by $*$-algebraic relations, by strong covariance we then derive that the element
\[
\dot{\ff} := \dot{\fv}_{p_1}^* \dot{\fv}_{q_1} \cdots \dot{\fv}_{p_n}^* \dot{\fv}_ {q_n} \dot{\fv}_{q_n}^* \dot{\fv}_{p_n} \cdots \dot{\fv}_{q_1}^* \dot{\fv}_{p_1}
\]
is a projection in $\ca_{\scv}(P)$.
Indeed we have that $\ff - \ff^2 \in \I_e$ since
\[
q_{\qd} \Phi \la \rho(\ff - \ff^2)
=
q_{\qd} \Phi(E_{[\Bx]}) - q_{\qd} \Phi(E_{[\Bx]}^2)
=
0,
\]
and so $\dot{\ff} = \dot{\ff}^2$.

Next let $r_1, s_1, \dots, r_n, s_n$ such that
\[
\Bx := p_1^{-1} q_1 \dots p_m^{-1} q_m P = r_1^{-1} s_1 \dots r_n^{-1} s_n P
\]
and set 
\[
\fg := \fv_{r_1}^* \fv_{s_1} \cdots \fv_{r_n}^* \fv_ {s_n} \fv_{s_n}^* \fv_{r_n} \cdots \fv_{s_1}^* \fv_{r_1}.
\]
As before we see that $\la \rho(\fg) = E_{[\Bx]} = \la \rho(\ff)$ and so $\dot{\fg} = \dot{\ff}$.
Therefore there is a well-defined map
\[
\J \ni \Bx = p_1^{-1} q_1 \dots p_m^{-1} q_m P \mapsto \dot{\fe}_{\Bx} := \fv_{p_1}^* \fv_{q_1} \cdots \fv_{p_n}^* \fv_ {q_n} \fv_{q_n}^* \fv_{p_n} \cdots \fv_{q_1}^* \fv_{p_1}.
\]
Then the family $\{\dot{\fv}_p, \dot{\fe}_{\Bx} \mid p \in P, \Bx \in \J\}$ satisfies the axioms (I) and (II) of $\ca_s(P)$.
The third axiom III${}_G$ is $e$-graded and it is satisfied in $\ca_\la(P)$, thus it follows (with a similar argument to the one above) that this family satisfies axiom III${}_G$ as well.
Hence the proof is completed by the universal property of $\ca_s(P)$.
\end{proof}

Next we wish to introduce a reduced version of the strong covariance algebra. 

\begin{definition}
Let $P$ be a unital subsemigroup in a group $G$ and let $\la \rho \colon \ca_{\textup{iso}}(P) \to \ca_\la(P)$ be the canonical $*$-epimorphism.
We define the \emph{reduced quotient strong covariance algebra} $q_{\scv}(\ca_{\la}(P))$ be the quotient of $\ca_\la(P)$ by the ideal $\la \rho(\I_\infty)$.
\end{definition}

We therefore can update the previous diagrams to obtain
\[
\xymatrix{
[\ca_{\textup{iso}}(P)]_e \ar[r]^{\rho} \ar[dr] &
[\ca_s(P)]_e \ar[r]^{\la} \ar[d] &
[\ca_\la(P)]_e \ar[r]^{\Phi:= \bigoplus\limits_{\textup{fin } F \subseteq G} \Phi_F|_{X_F}} \ar[d]^{q_{\scv}} &
\prod\limits_{\text{fin } F \subseteq G} \B(X_F) \ar[d]^{q_{\qd}} \\
& 
[\ca_{\scv}(P)]_e \ar[r] & 
[q_{\scv}(\ca_{\la}(P))]_e \ar[r] &
\quo{\prod\limits_{\text{fin } F \subseteq G} \B(X_F)}{c_0(\B(X_F) \mid {\text{fin } F \subseteq G})} .
}
\]
Since $q_{\scv}(\ca_{\la}(P))$ is a quotient of $\ca_\la(P)$ by an induced ideal it inherits the coaction of $G$.
Hence $q_{\scv}$ is an equivariant unital $*$-representation of $\ca_{\scv}(P)$, and we derive the following corollary of Theorem \ref{T:inj ex}.

\begin{corollary}\label{C:cond exp inj}
Let $P$ be a unital subsemigroup in a group $G$.
Then $[q_{\scv}(\ca_{\la}(P))]_e \simeq [\ca_{\scv}(P)]_e$.
Moreover any unital $*$-representation of $q_{\scv}(\ca_{\la}(P))$ that admits a coaction of $G$ is faithful on $[q_{\scv}(\ca_{\la}(P))]_e$.
\end{corollary}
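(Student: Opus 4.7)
The plan is to deduce both claims from Theorem~\ref{T:inj ex} by producing a canonical equivariant unital $*$-epimorphism from $\ca_{\scv}(P)$ onto $q_{\scv}(\ca_{\la}(P))$ and transferring faithfulness through it.

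First I would observe that the ideal $\la\rho(\I_\infty) \lhd \ca_{\la}(P)$ is induced: since $\la\rho$ preserves the grading and $\I_\infty$ is generated by the $e$-graded ideal $\I_e$, the image $\la\rho(\I_\infty)$ is generated by elements of $[\ca_{\la}(P)]_e$. Hence $q_{\scv}(\ca_{\la}(P))$ inherits a (normal) coaction of $G$ from $\ca_{\la}(P)$, and similarly $\ca_{\scv}(P)$ carries a coaction inherited from $\ca_{\textup{iso}}(P)$. Composing $\la\rho$ with the quotient map $\ca_{\la}(P) \to q_{\scv}(\ca_{\la}(P))$ annihilates $\I_\infty$, so it factors through $\ca_{\scv}(P) = \ca_{\textup{iso}}(P)/\I_\infty$ to yield a surjective unital $*$-homomorphism
\[
\Pi \colon \ca_{\scv}(P) \to q_{\scv}(\ca_{\la}(P)).
\]
This $\Pi$ is equivariant because it sends the image of each generator $\fv_p$ in $\ca_{\scv}(P)$ to the image of $V_p$ in $q_{\scv}(\ca_{\la}(P))$, and the two coactions agree on these generators.

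Next I would apply Theorem~\ref{T:inj ex}: since $\Pi$ is a unital $*$-representation of $\ca_{\scv}(P)$ whose range admits a compatible coaction of $G$, the restriction of $\Pi$ to $[\ca_{\scv}(P)]_e$ is faithful. To see that the same restriction maps onto $[q_{\scv}(\ca_{\la}(P))]_e$, I would use the Fourier projections $E$ and $E'$ onto the respective $e$-fibers coming from the coactions: for any $b \in [q_{\scv}(\ca_{\la}(P))]_e$ choose $a \in \ca_{\scv}(P)$ with $\Pi(a) = b$, and then $\Pi(E(a)) = E'(\Pi(a)) = b$ with $E(a) \in [\ca_{\scv}(P)]_e$. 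Putting the two observations together gives the canonical isomorphism $[\ca_{\scv}(P)]_e \simeq [q_{\scv}(\ca_{\la}(P))]_e$.

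For the moreover part, given any unital $*$-representation $\pi \colon q_{\scv}(\ca_{\la}(P)) \to B$ whose image admits a coaction of $G$ making $\pi$ equivariant, the composition $\pi \circ \Pi$ is a unital equivariant $*$-representation of $\ca_{\scv}(P)$. A second application of Theorem~\ref{T:inj ex} shows that $\pi \circ \Pi$ is faithful on $[\ca_{\scv}(P)]_e$, which through the isomorphism above is equivalent to $\pi$ being faithful on $[q_{\scv}(\ca_{\la}(P))]_e$. The only subtle step in this plan is checking that $\Pi$ is well-defined and equivariant with respect to the inherited coactions; once that is in place, the rest is a direct transfer through Theorem~\ref{T:inj ex}.
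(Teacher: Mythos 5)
Your proposal is correct and follows essentially the same route as the paper: the paper's (implicit) argument is precisely that $\la\rho(\I_\infty)$ is an induced ideal, so $q_{\scv}(\ca_{\la}(P))$ inherits the coaction, the induced map $\ca_{\scv}(P)\to q_{\scv}(\ca_{\la}(P))$ is an equivariant unital $*$-representation, and both claims then follow from Theorem~\ref{T:inj ex}. Your write-up just makes explicit the factorization of $\Pi$ and the use of the conditional expectations for surjectivity onto the $e$-fiber, which is exactly what the diagram preceding the corollary encodes.
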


\subsection{Fell bundles}

If $P$ embeds in a group $G$, then there are canonical coactions of $G$ on the universal C*-algebras $\ca_s(P)$ and $\ca_{\scv}(P)$, which give gradings and thus Fell bundles.

\begin{definition}
Let $P$ be a unital semigroup in a group $G$ and consider the C*-algebras $\ca_s(P)$ and $\ca_{\scv}(P)$.
We define the Fell bundles $\P_s$ and $\P_{\scv}$ over $G$ by
\[
\P_{s, g} := [\ca_s(P)]_g
\qand
\P_{\scv, g} := [\ca_{\scv}(P)]_g
\quad
\text{ for all } g \in G.
\]
\end{definition}

Here we show the connections with the other algebras.

\begin{theorem} \label{T:Fell bdl univ}
Let $P$ be a unital semigroup in a group $G$.
Then
\[
\ca(\P_s) \simeq \ca_s(P) \simeq \ca(\I_V),
\quad
\ca_\la(\P_s) \simeq \ca_\la(\I_V),
\qand
\ca(\P_{\scv}) \simeq \ca_{\scv}(P),
\]
by canonical $*$-homomorphisms that fix $P$.
Moreover we have that $\J$ is independent if and only if $\ca_{\la}(\P_s) \simeq \ca_{\la}(P)$.

If $G$ is exact (but $\J$ is not necessarily independent) then
\[
\ca_{\la}(\P_{\scv}) \simeq q_{\scv}(\ca_{\la}(P))
\]
by a canonical $*$-homomorphism that fixes $P$.
\end{theorem}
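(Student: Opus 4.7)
The proof strategy is to combine universal properties with Exel's Fell bundle theory, building on the identification $\ca_s(P) \simeq \ca(\I_V)$ already established in Theorem \ref{T:inv sgrp}.

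For the universal identifications $\ca(\P_s) \simeq \ca_s(P)$ and $\ca(\P_{\scv}) \simeq \ca_{\scv}(P)$: both $\ca_s(P)$ and $\ca_{\scv}(P)$ carry coactions of $G$ giving conditional expectations onto the unit fibers, so by \cite[Theorem 3.3]{Exe97} the fiber decompositions $\P_s$ and $\P_{\scv}$ are topological gradings. This yields canonical $*$-epimorphisms from the universal Fell bundle algebras onto $\ca_s(P)$ and $\ca_{\scv}(P)$ that fix the generators. For the inverse direction, the generating isometries of $\ca_s(P)$ (resp.\ $\ca_{\scv}(P)$) sit inside the fibers $\P_s$ (resp.\ $\P_{\scv}$), and hence inside the universal Fell bundle algebras; the relations of type I are fiberwise, while axioms II and III${}_G$ (and the strong covariance relations) live in the unit fibers, which embed as C*-subalgebras. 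The universal properties of $\ca_s(P)$ and $\ca_{\scv}(P)$ then provide the inverse maps. The identification $\ca_\la(\P_s) \simeq \ca_\la(\I_V)$ follows by applying the reduced Fell bundle construction to the isomorphism $\ca(\I_V) \simeq \ca(\P_s)$ and invoking the standard identification of the reduced inverse semigroup algebra with the reduced C*-algebra of its natural grading Fell bundle over $G$ (see \cite{Exe17}).

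For the equivalence $\J$ independent $\iff \ca_\la(\P_s) \simeq \ca_\la(P)$: since $\ca_\la(P)$ admits a normal coaction, the equivariant $*$-epimorphism $\la_s \colon \ca_s(P) \to \ca_\la(P)$ factors through $\ca_\la(\P_s)$, giving an equivariant $*$-epimorphism $\pi \colon \ca_\la(\P_s) \to \ca_\la(P)$. Reduced Fell bundle algebras carry a faithful conditional expectation onto the unit fiber, so $\pi$ is injective if and only if it is injective on $[\ca_\la(\P_s)]_e \simeq \P_{s,e} = D_s(P)$, which by Corollary \ref{C:nor-li} is equivalent to $\J$ being independent.

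Finally, for the exactness statement: the ideal $\la \rho(\I_\infty) \lhd \ca_\la(P)$ is generated by its intersection with $[\ca_\la(P)]_e$ (the image of $\I_e$), hence induced. Under exactness of $G$, quotients by induced ideals commute with the reduced Fell bundle construction, cf.\ \cite[Proposition A.5]{CLSV11}, so $q_{\scv}(\ca_\la(P))$ inherits a normal coaction and is canonically isomorphic to the reduced C*-algebra of its Fell bundle. Chasing through the Fell bundle morphisms induced by the $G$-equivariant quotient $\ca_s(P) \to \ca_{\scv}(P)$ then identifies this Fell bundle with $\P_{\scv}$, yielding $q_{\scv}(\ca_\la(P)) \simeq \ca_\la(\P_{\scv})$. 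I expect the main obstacle to be precisely this last identification: one must verify that the fibers of the quotient Fell bundle agree with $\P_{\scv}$ even when $\J$ fails to be independent, so that $\P_s$ and the Fell bundle of $\ca_\la(P)$ already differ. This requires carefully tracking how $\I_\infty \subseteq [\ca_{\textup{iso}}(P)]_e$ propagates through the various quotients to yield exactly the strong covariance ideal at the unit fiber of each algebra, using the exactness of $G$ to exchange the order of reduction and quotient at each step.
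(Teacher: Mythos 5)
Your arguments for $\ca(\P_s) \simeq \ca_s(P) \simeq \ca(\I_V)$, for $\ca_\la(\P_s) \simeq \ca_\la(\I_V)$, and for the equivalence of independence of $\J$ with $\ca_\la(\P_s) \simeq \ca_\la(P)$ are essentially the paper's: the defining relations beyond isometricity are $e$-graded, so Fell bundle representations and representations of the universal algebras coincide, and the reduced identifications reduce, via faithfulness of the conditional expectations, to injectivity on the unit fiber, where Corollary \ref{C:nor-li} applies. These parts are fine.

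The last claim is where your proposal has a genuine gap, and you have in fact named it yourself: you say the main obstacle is to verify that the unit fiber of the Fell bundle of $q_{\scv}(\ca_\la(P))$ agrees with $\P_{\scv,e}$, and you propose to resolve it by ``carefully tracking how $\I_\infty$ propagates through the various quotients.'' That tracking does not close the argument on its own. The map $[\ca_{\textup{iso}}(P)]_e \to [\ca_\la(P)]_e$ has a kernel that can interact with $\I_e$ in an uncontrolled way, so there is no a priori reason that $\la\rho(\I_\infty) \cap [\ca_\la(P)]_e$ cuts out of $[\ca_\la(P)]_e$ exactly the quotient $[\ca_{\textup{iso}}(P)]_e/(\I_\infty \cap [\ca_{\textup{iso}}(P)]_e) = [\ca_{\scv}(P)]_e$; exactness of $G$ only lets you commute ``take unit fiber'' with ``take quotient by the induced ideal,'' it does not identify the resulting fiber with $\P_{\scv,e}$. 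The missing input is Sehnem's faithfulness property (C3), recorded in the paper as Theorem \ref{T:inj ex} and Corollary \ref{C:cond exp inj}: every unital $*$-representation of $\ca_{\scv}(P)$ admitting a coaction of $G$ is automatically faithful on $[\ca_{\scv}(P)]_e$. Since $q_{\scv}(\ca_\la(P))$ is an equivariant unital representation of $\ca_{\scv}(P)$ (being the quotient of $\ca_\la(P)$ by the induced ideal $\la\rho(\I_\infty)$), this gives $[q_{\scv}(\ca_\la(P))]_e \simeq [\ca_{\scv}(P)]_e$ at once; a Fell bundle morphism injective on the unit fiber is isometric on every fiber, so the grading of $q_{\scv}(\ca_\la(P))$ \emph{is} $\P_{\scv}$. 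Exactness is then used only to guarantee that the conditional expectation inherited from the normal coaction on $\ca_\la(P)$ is faithful on the quotient, after which Exel's criterion yields $q_{\scv}(\ca_\la(P)) \simeq \ca_\la(\P_{\scv})$. Without invoking this automatic faithfulness on the fixed point algebra, your final step does not go through.
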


\begin{proof}
We have already seen in Theorem \ref{T:inv sgrp} that $\ca_s(P) \simeq \ca(\I_V)$. To prove the isomorphism to the full C*-algebra of the bundle, recall that $\ca_s(P)$ is a quotient of $\ca_{\textup{iso}}(P)$ by an induced ideal (as the extra relations in $\ca_s(P)$ are $e$-graded).
Hence the representations of $\P_s$ are automatically representations of $\ca_s(P)$ and conversely.
A similar argument applies for $\P_{\scv}$ to prove $\ca(\P_{\scv}) \simeq \ca_{\scv}(P)$.

The proof of Theorem \ref{T:inv sgrp} asserts that a representation of $\I_V$ implements a representation of $\ca_s(P)$ and thus of $\P$.
Since the reduced representation of $\I_V$ has a faithful conditional expectation we derive by \cite{Exe97} that $\ca_\la(\I_V) \simeq \ca_\la(\P_s)$.

For the second part we have that $\P_{s,e} \simeq [\ca_s(P)] = D_s(P)$.
Therefore by Corollary \ref{C:nor-li} we have that $\J$ is independent if and only if $\P_{s,e} \simeq D_\la(P)$, equivalently if and only if $\ca_\la(P)$ is an injective representation of $\P_s$.
The latter is equivalent to $\ca_\la(\P_s) \simeq \ca_\la(P)$ by \cite{Exe97}, as $\ca_\la(P)$ has a faithful conditional expectation.

Suppose now that $G$ is exact.
By Corollary \ref{C:cond exp inj} $q_{\scv}(\ca_{\la}(P))$ is an isometric representation of $\P_{\scv}$.
Since $G$ is exact, the normal coaction of $G$ on $\ca_\la(P)$ is inherited by $q_{\scv}(\ca_{\la}(P))$.
Thus by \cite{Exe97} we have that $q_{\scv}(\ca_{\la}(P)) \simeq \ca_\la(\P_{\scv})$.
\end{proof}

\subsection{The partial crossed product realization}

A alternative description of $\ca_\la(P)$ as a reduced partial crossed product has been given in \cite[Proposition 3.10]{Li17}.
Let us recall some notation and facts about partial product systems from \cite{Exe17, McC95}.

Suppose that a group $G$ with identity $e$ acts on a topological space $X$ by a partial action $\theta$ in the sense that:
\begin{enumerate}
\item there is a collection $\{\Om_g\}_{g \in G}$ of open subsets of $X$ such that $\Om_e = X$;
\item there is a collection $\{\theta_g\}_{g \in G}$ of homeomorphisms $\theta_g \colon \Om_{g^{-1}} \to \Om_g$ such that $\theta_e = \id_X$;
\item for all $g_1, g_2 \in G$ we have that $\theta_{g_2} (\Om_{(g_1 g_2)^{-1}} \cap \Om_{g_2^{-1}}) = \Om_{g_1^{-1}} \cap \Om_{g_2}$, and $\theta_{g_1 g_2}(x) = \theta_{g_1} \circ \theta_{g_2}(x)$ for all $x \in \Om_{(g_1 g_2)^{-1}} \cap \Om_{g_2^{-1}}$.
\end{enumerate}
Then the reduced crossed product algebra $C_0(X) \rtimes_r G$ is defined in the following way.
On $\ell^2(G) \otimes \ell^2(X)$ let the twisted representation of $C_0(\Om_g)$ given by
\[
\mu(f) \de_h \otimes \xi = \de_h \otimes M_h(f) \xi,
\]
where
\[
(M_h(f) \xi)(x) = 
\begin{cases}
f(\theta_h(x)) \xi(x) & \text{ if } x \in \Om_{h^{-1}}, \\
0 & \text{ if } x \notin \Om_{h^{-1}}.
\end{cases}
\]
Furthermore, let $E_g$ be the projection on $\ol{\mu(C_0(\Om_{g^{-1}})) (\ell^2(G) \otimes \ell^2(X))}$ so that
\[
E_g(\de_h \otimes \de_x)
=
\begin{cases}
\de_h \otimes \de_x  & \text{ if } x \in \Om_{(gh)^{-1}} \cap \Om_{h^{-1}}, \\
0 & \text{ otherwise},
\end{cases}
\]
and for $g \in G$ define
\[
\dot{u}_g := (\la_g \otimes I_{\ell^2(X)}) \cdot E_g.
\]
Then the \emph{reduced crossed product} of the partial action is defined by 
\[
C_0(X) \rtimes_r G := \ol{\spn}\{ \mu(f_g) \dot{u}_g \mid f \in C_0(\Om_g), g \in G \},
\]
and it follows that it is a C*-algebra.
It is known that the unitary operator
\[
U \colon \ell^2(G) \otimes \ell^2(X) \otimes \ell^2(G) \to \ell^2(G) \otimes \ell^2(X) \otimes \ell^2(G) ;
\de_h \otimes \de_x \otimes \de_{h'} \mapsto \de_h \otimes \de_x \otimes \de_{h h'}
\]
induces a ``reduced'' coaction $\de_\la$
\[
\xymatrix{
C_0(X) \rtimes_r G \ar[rr]^{\simeq} & & (C_0(X) \rtimes_r G) \otimes I \ar[rr]^{\ad_{U}} & & (C_0(X) \rtimes_r G) \otimes \ca_\la(G),
}
\]
and thus a normal coaction $\de$ of $G$ on $C_0(X) \rtimes_r G$.

Recall that there is also the \emph{universal partial crossed product}, denoted by $C_0(X) \rtimes G$, which is the universal C*-algebra subject to $\ell^1$-bounded representations of the algebra of monomials $f_g \de_g$ where $f_g \in C_0(\Om_g)$, the $\de_g$ are invertible of norm one for $g \in G$, and the covariant multiplication is given by
\[
(f_1 \de_{g_1}) \cdot (f_2 \de_{g_2}) := \theta_{g_1}(\theta_{g_1^{-1}}(f_1) f_2) \de_{g_1 g_2}
\foral
f_i \in C_0(\Om_{g_i}), g_i \in G.
\]

A closed set $Y \subseteq X$ is called \emph{invariant} under $\theta$, or \emph{$G$-invariant}, if
\[
\theta_g(Y \cap \Om_{g^{-1}}) \subseteq Y
\foral
g \in G.
\]
In this case one can define a partial action on $\Om'_g := Y \cap \Om_g$ by the restrictions $\theta_g' := \theta_g|_{\Om_g'}$.
If the $\theta_g'$ are also homeomorphisms then we can define the reduced partial crossed product $C_0(Y) \rtimes_r G$, and there is a canonical $*$-homomorphism
\[
C_0(X) \rtimes_r G \to C_0(Y) \rtimes_r G ; \mu(f_g) \dot{u}_g \mapsto \mu(f_g|_{\Om'_g}) \dot{u}_g,
\]
in the sense that it sends generators to their restriction on $Y$.
This $*$-homomorphism is faithful if and only if $X = Y$.

Conversely, quotients of $C_0(X) \rtimes_r G$ produce $G$-invariant closed sets of $X$.
Towards this end let a $*$-homomorphism
\[
\Phi \colon C_0(X) \rtimes_r G \to \B(H),
\]
and let us write $\theta_g^*$ for the $*$-endomorphism induced by $\theta_{g^{-1}}$.
For convenience let us identify $C_0(\Om_g)$ with $\mu(C_0(\Om_g))$.
We will show that there exists a $*$-homomorphism
\[
\vartheta_g^* \colon \Phi(C_0(\Om_{g^{-1}})) \to \Phi(C_0(\Om_{g}))
\text{ such that }
\vartheta_g^* \circ \Phi|_{C_0(\Om_{g^{-1}})}  = \Phi \circ \theta_g^*.
\]
Then it will follow that $\Phi(C_0(X))$ is $G$-invariant, and by duality defines a closed $G$-invariant subspace $Y$ of $X$.
By Arveson's Extension Theorem there exists a unital completely positive map
\[
\phi \colon \B(\ell^2(G) \otimes \ell^2(X)) \to \B(H)
\text{ such that }
\phi|_{C_0(X) \rtimes_r G} = \Phi.
\]
We directly define $\vartheta_g^* \colon \Phi(C_0(\Om_{g^{-1}})) \to \B(H)$ by
\[
\vartheta_g^*(\Phi(f)) := \phi(\dot{u}_g) \Phi(f) \phi(\dot{u}_g)^*
\foral
f \in C_0(\Om_{g^{-1}}).
\]
It is clear that $\vartheta_g^*$ is a contractive completely positive map implemented by the contraction $\phi(\dot{u}_g)$.

\begin{proposition}\label{P:ind inv}
With the aforementioned notation, we have that
\[
\vartheta_g^* \circ \Phi|_{C_0(\Om_{g^{-1}})} = \Phi \circ \theta_g^*.
\]
Therefore $\vartheta_g^*$ is a $*$-isomorphism onto $\Phi(C_0(\Om_{g^{-1}}))$.
\end{proposition}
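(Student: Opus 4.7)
The plan is to place $C_0(X) \rtimes_r G$ inside the multiplicative domain of any Arveson extension $\phi$ of $\Phi$, so that the desired identity reduces to the covariance relation that already holds inside the reduced partial crossed product. Since $\phi|_{C_0(X) \rtimes_r G} = \Phi$ is a $*$-homomorphism, every $a \in C_0(X) \rtimes_r G$ satisfies $\phi(a^*a) = \phi(a)^*\phi(a)$ and $\phi(aa^*) = \phi(a)\phi(a)^*$; by the standard multiplicative-domain criterion this places the whole subalgebra $C_0(X) \rtimes_r G$ inside $\operatorname{MD}(\phi)$. Consequently $\phi(ab) = \phi(a)\phi(b)$ and $\phi(ba) = \phi(b)\phi(a)$ whenever $a \in C_0(X) \rtimes_r G$ and $b \in \B(\ell^2(G) \otimes \ell^2(X))$.

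Next I use the covariance relation that holds in the reduced partial crossed product,
\[
\dot{u}_g\, \mu(f)\, \dot{u}_g^* = \mu(\theta_g^*(f)) \qquad \text{for } f \in C_0(\Om_{g^{-1}}),
\]
together with the intertwining identity $\dot{u}_g \mu(f) = \mu(\theta_g^*(f))\,\dot{u}_g$, which shows that $\dot{u}_g \mu(f)$ itself lies in $C_0(X) \rtimes_r G$ (since $\theta_g^*(f) \in C_0(\Om_g)$). Thus both $\mu(f)$ and $\dot{u}_g \mu(f)$ belong to $\operatorname{MD}(\phi)$, and invoking the multiplicative-domain property twice we compute
\[
\vartheta_g^*(\Phi(f))
= \phi(\dot{u}_g)\,\Phi(f)\,\phi(\dot{u}_g)^*
= \phi(\dot{u}_g \mu(f))\, \phi(\dot{u}_g^*)
= \phi(\dot{u}_g \mu(f)\, \dot{u}_g^*)
= \phi(\mu(\theta_g^*(f))) = \Phi(\theta_g^*(f)).
\]
This is exactly $\vartheta_g^* \circ \Phi|_{C_0(\Om_{g^{-1}})} = \Phi \circ \theta_g^*$, and it shows that $\vartheta_g^*$ restricts to a multiplicative, $*$-preserving map from $\Phi(C_0(\Om_{g^{-1}}))$ into $\Phi(C_0(\Om_g))$.

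For the final $*$-isomorphism claim I run the same construction with $g^{-1}$ in place of $g$, obtaining $\vartheta_{g^{-1}}^* \colon \Phi(C_0(\Om_g)) \to \Phi(C_0(\Om_{g^{-1}}))$ with $\vartheta_{g^{-1}}^* \circ \Phi|_{C_0(\Om_g)} = \Phi \circ \theta_{g^{-1}}^*$. Because the underlying partial homeomorphisms satisfy $\theta_{g^{-1}}\theta_g = \operatorname{id}_{\Om_{g^{-1}}}$ and $\theta_g \theta_{g^{-1}} = \operatorname{id}_{\Om_g}$, their comorphisms $\theta_g^*$ and $\theta_{g^{-1}}^*$ are mutually inverse, so $\vartheta_{g^{-1}}^* \circ \vartheta_g^* = \operatorname{id}$ on $\Phi(C_0(\Om_{g^{-1}}))$ and $\vartheta_g^* \circ \vartheta_{g^{-1}}^* = \operatorname{id}$ on $\Phi(C_0(\Om_g))$, yielding the asserted $*$-isomorphism. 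The only genuinely nontrivial point is the input from partial-crossed-product theory that $\dot{u}_g \mu(f)$ (and hence $\dot{u}_g \mu(f)\dot{u}_g^*$) lies in $C_0(X) \rtimes_r G$ rather than merely in the ambient $\B(\ell^2(G)\otimes\ell^2(X))$; this is what lets the multiplicative-domain property do all the work, and is the step I would verify with most care.
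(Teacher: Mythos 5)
Your proof is correct and follows essentially the same route as the paper's: extend $\Phi$ to a ucp map $\phi$ by Arveson's Extension Theorem, note that $C_0(X)\rtimes_r G$ lies in the multiplicative domain of $\phi$, and reduce the identity to the covariance relation $\dot{u}_g\mu(f)\dot{u}_g^*=\mu(\theta_g^*(f))$. The only cosmetic differences are that the paper factors a positive $f$ as $f^{1/2}f^{1/2}$ before invoking the multiplicative domain (which your two-sided use of the multiplicative-domain property makes unnecessary), and that you spell out the mutual-inverse argument for the final isomorphism claim, which the paper leaves implicit.
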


\begin{proof}
By construction we have that $C_0(X) \rtimes_r G$ is in the multiplicative domain of $\phi$ and therefore $\phi(\dot{u}_g x) =\phi(\dot{u}_g) \phi(x)$ for all $x \in C_0(X) \rtimes_r G$.
In particular for every $f \in C_0(\Om_{g^{-1}}) \simeq \mu(C_0(\Om_{g^{-1}}))$ with $f \geq 0$ we have that
\begin{align*}
 \vartheta_g^*(\Phi(f)) 
& = \phi(\dot{u}_g) \Phi(f) \phi(\dot{u}_g)^* 
= \phi(\dot{u}_g) \Phi(f^{\tfrac{1}{2}}) \Phi(f^{\tfrac{1}{2}}) \phi(\dot{u}_g)^* \\
& = \Phi(\dot{u}_g f^{\tfrac{1}{2}}) \Phi(f^{\tfrac{1}{2}} \dot{u}_g^*) 
= \Phi(\dot{u}_g f \dot{u}_g^*) = \Phi(\theta_g^*(f)),
\end{align*}
where we used that $\Phi(f^{\tfrac{1}{2}})$ lies in the multiplicative domain of $\phi$ for the third equality. 
This completes the proof since the maps involved above are linear and continuous.
\end{proof}

Next we pass to the connection of partial crossed product with our context.
Let $P$ be a unital subsemigroup in a group $G$ that it generates, and let the isometries $p \mapsto V_p$ be the left regular representation on $\ell^2(P)$.
On $\I_V^\times := \I_V \setminus \{0\}$ we define the homomorphism
\[
\si \colon I_V^\times \to G ; V^*_{p_1} V_{q_1} \cdots V^*_{p_n} V_{q_n} \mapsto p_1^{-1} q_1 \cdots p_n^{-1} q_n.
\]
Then $D_\la(P) = \ol{\spn}\{ \si^{-1}(e) \}$, and define
\[
D_{g^{-1}} := \ol{\spn} \{ V^*V \mid V \in I_V^\times, \si(V) = g\}.
\]
From \cite[Section 3.3]{Li17} every $D_{g^{-1}}$ is an ideal of the abelian C*-algebra $D_\la(P)$.
By identifying $\ell^2(P)$ with a subspace of $\ell^2(G)$ we may view $\ca_\la(P) \subseteq \B(\ell^2(G))$ and we get that
\[
V = \la_g V^* V \foral V \in \si^{-1}(g).
\]
Therefore there is an induced $*$-homomorphism implemented by $\la_g$, namely
\[
\theta_g^* \colon D_{g^{-1}} \to D_{g} ; V^*V \mapsto \la_g V^*V \la_g^* = VV^*.
\]
By construction $D_\la(P)$ and $D_{g^{-1}}$ are abelian and we denote their spectra by $\Om_P$ and $\Om_{g^{-1}}$, respectively.
Then the actions $\theta_g$ induced by $\theta_{g^{-1}}^*$ form a partial action of $G$ on $\Om_P$, i.e.,
\[
\theta_g^*(\chi) = \chi \circ \theta_{g^{-1}} \foral \chi \in D_{g^{-1}} \simeq C_0(\Om_{g^{-1}}).
\]

\begin{proposition} 
\cite[Proposition 3.10]{Li17}
Let $P$ be a unital semigroup in a group $G$.
Then there is a canonical $*$-isomorphism
\[
\ca_\la(P) \to D_\la(P) \rtimes_r G ; V_p \mapsto \mu(V_p V_p^*) \dot{u}_p = \dot{u}_p.
\]
\end{proposition}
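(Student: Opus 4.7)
The plan is to recognize that both $\ca_\la(P)$ and $D_\la(P) \rtimes_r G$ are reduced C*-algebras of topologically $G$-graded bundles with the same degree-$e$ fiber $D_\la(P)$ and with faithful conditional expectations onto that fiber, and then to match their Fell bundles and invoke the uniqueness theorem for reduced Fell bundle C*-algebras. Under this match, the fiber at $g$ of $\ca_\la(P)$, namely $\ol{\spn}\{V \in \I_V^\times : \si(V) = g\}$, should correspond to the fiber $\mu(D_g) \dot{u}_g$ of $D_\la(P) \rtimes_r G$ via $V \mapsto \mu(VV^*) \dot{u}_g$, noting that $VV^* \in D_g$ since $V^* \in \si^{-1}(g^{-1})$. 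Specializing to the generator $V_p$ yields $V_p \mapsto \mu(V_p V_p^*) \dot{u}_p$; the further equality $\mu(V_p V_p^*) \dot{u}_p = \dot{u}_p$ holds because $V_p V_p^* = E_{[pP]}$ is identically $1$ on the spectrum $\Om_p$ of $D_p$ (every generator $E_{[phP]}$ of the ideal $D_p$ sits below $E_{[pP]}$).

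First I would identify the two Fell bundles concretely. Viewing $\ca_\la(P) \subseteq \B(\ell^2(G))$, each $V \in \si^{-1}(g)$ factors as $V = VV^* \la_g$, which is the conjugate form of the identity $V = \la_g V^*V$ established earlier in the section. Thus $V \mapsto \mu(VV^*) \dot{u}_g$ is a $D_\la(P)$-bimodule map from the $g$-fiber of $\ca_\la(P)$ into $\mu(D_g) \dot{u}_g$. Multiplicativity of the assignment follows from the spatial identity $(V_1 V_2)(V_1 V_2)^* = V_1 V_1^* \cdot \la_{g_1} V_2 V_2^* \la_{g_1}^*$ in $\B(\ell^2(G))$, obtained by multiplying $V_i = V_i V_i^* \la_{g_i}$. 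This mirrors exactly the partial crossed product rule $[\mu(f_1) \dot{u}_{g_1}][\mu(f_2) \dot{u}_{g_2}] = \mu(f_1 \cdot \theta_{g_1}^*(f_2|_{\Om_{g_1^{-1}}})) \dot{u}_{g_1 g_2}$, so products, the $*$-operation, and the $D_\la(P)$-bimodule structures on both sides are respected, giving an isomorphism of Fell bundles.

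Finally, I would apply the uniqueness of reduced Fell bundle C*-algebras. Both $\ca_\la(P)$, with the diagonal compression as faithful conditional expectation, and $D_\la(P) \rtimes_r G$, with the canonical faithful conditional expectation of the reduced partial crossed product, are reduced C*-algebras of their respective (isomorphic) Fell bundles, so the fiber-wise isomorphism above extends uniquely to a $*$-isomorphism $\ca_\la(P) \to D_\la(P) \rtimes_r G$, and this is the claimed map. The main obstacle is the well-definedness of the fiber-wise assignment: the factorization $V = V_{p_1}^* V_{q_1} \cdots V_{p_n}^* V_{q_n}$ for $V \in \I_V^\times$ is highly non-unique, so one must use the intrinsic spatial representation $V = VV^* \la_g$ in $\B(\ell^2(G))$ --- which depends only on $V$ and on the degree $g = \si(V)$ --- to see that the assignment $V \mapsto \mu(VV^*) \dot{u}_g$ depends only on $V$, and then extend by linearity and density to the full fiber.
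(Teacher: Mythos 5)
The paper itself offers no proof of this proposition: it is quoted directly from \cite[Proposition 3.10]{Li17}, so there is no internal argument to measure yours against. Your reconstruction is correct and follows the same route as Li's original argument and as the Fell-bundle identifications the paper carries out in Theorem \ref{T:Fell bdl univ}: both $\ca_\la(P)$ (with the diagonal compression $E_\la$) and $D_\la(P)\rtimes_r G$ (with the canonical expectation of a reduced partial crossed product) are topologically $G$-graded with faithful conditional expectations onto their unit fibers, so by Exel's uniqueness theorem for reduced cross-sectional algebras it suffices to exhibit an isomorphism of the underlying Fell bundles. You correctly isolate the only delicate points. Well-definedness and injectivity of the fibre map $V\mapsto\mu(VV^*)\dot u_g$ follow from the intrinsic identity $V=VV^*\la_g$ in $\B(\ell^2(G))$, which shows that $\sum_i c_iV_i=0$ if and only if $\sum_i c_iV_iV_i^*=0$ because $\la_g$ is unitary; surjectivity onto $\ol{\mu(D_g)\dot u_g}$ holds because $D_g$ is by definition the closed span of the range projections $VV^*$ with $\si(V)=g$; and multiplicativity reduces to $(V_1V_2)(V_1V_2)^*=V_1V_1^*\cdot\la_{g_1}V_2V_2^*\la_{g_1}^*$, which is valid since both factors are diagonal projections on $\ell^2(G)$ and hence commute. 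Finally, $\mu(V_pV_p^*)\dot u_p=\dot u_p$ is justified exactly as you say: $V_pV_p^*=E_{[pP]}$ dominates every generator $WW^*$ with $\si(W)=p$ of the ideal $D_p$, so it acts as a unit on $D_p$ and hence on the range of $\dot u_p$. I see no gap.
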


In \cite{Li17} it was shown that there is a smallest non-empty closed $G$-invariant subspace $\partial \Om_P$ of $\Om_P$.
The subspace $\partial \Om_P$ can be identified by using the semilattice of idempotents in $P$.

\begin{definition}
Let $P$ be a unital semigroup in a group $G$.
Let $\partial \Om_P$ be the smallest non-empty closed $G$-invariant subspace of $\Om_P$.
The C*-algebra
\[
\partial \ca_\la(P) := C(\partial \Om_P) \rtimes_r G
\]
is called the \emph{boundary quotient} of $\ca_\la(P)$.
\end{definition}

The canonical $*$-epimorphisms
\[
\ca_s(P) \to \ca_\la(P) \to \partial \ca_\la(P)
\]
restricted to the diagonals produce the injections
\[
\partial \Om_P \hookrightarrow \Om_P \hookrightarrow {\rm Spec}(D_s(P)).
\]
Thus $\partial \Om_P$ is a minimal $G$-invariant subspace of ${\rm Spec}(D_s(P))$. 
It is not immediate but by \cite[Lemma 5.7.10]{CELY17} it follows that $\partial \Om_P$ is the smallest $G$-invariant subspace of ${\rm Spec}(D_s(P))$, as well. 
The partial crossed product picture connects with the previous C*-algebras in the following way.

\begin{theorem} \label{T:un pcp}
Let $P$ be a unital semigroup in a group $G$.
Then 
\[
\ca_s(P) \simeq D_s(P) \rtimes G
\qand
\ca_{\scv}(P) \simeq C(\partial \Om_P) \rtimes G,
\]
and therefore
\[
\ca_{\la}(\P_{\scv}) \simeq C(\partial \Om_P) \rtimes_r G \equiv \partial \ca_\la(P),
\]
by canonical $*$-homomorphisms that fix $P$.
\end{theorem}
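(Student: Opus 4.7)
The plan is to establish the three stated isomorphisms in sequence: first $\ca_s(P) \simeq D_s(P) \rtimes G$ by equipping $D_s(P)$ with a partial action of $G$ coming from the coaction on $\ca_s(P)$, then $\ca_{\scv}(P) \simeq C(\partial \Om_P) \rtimes G$ by identifying the strong covariance quotient as the restriction of that partial crossed product to the minimal boundary, and finally the reduced statement $\ca_\la(\P_{\scv}) \simeq C(\partial \Om_P) \rtimes_r G = \partial \ca_\la(P)$ by passing to reduced Fell-bundle algebras via Theorem \ref{T:Fell bdl univ}.

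For the first isomorphism, the coaction of $G$ on $\ca_s(P)$ yields a topological grading $\{\P_{s,g}\}_{g \in G}$ with $\P_{s,e} = D_s(P)$. I would define the ideals $\wt D_g := \ol{\spn}\{v v^* : v \in \P_{s,g}\} \lhd D_s(P)$ and the partial $*$-isomorphisms $\wt\theta_g \colon \wt D_{g^{-1}} \to \wt D_g$ by $v^* v \mapsto v v^*$ for $v \in \P_{s,g}$. Well-definedness and the partial-action axioms follow from relation (III${}_G$) together with the Fell-bundle multiplication $\P_{s,g} \cdot \P_{s,h} \subseteq \P_{s,gh}$; dually this produces a partial action of $G$ on ${\rm Spec}\, D_s(P)$ whose restriction to $\Om_P \subseteq {\rm Spec}\, D_s(P)$ recovers Li's action from \cite[Section 3.3]{Li17}. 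Mutually inverse canonical $*$-homomorphisms fixing $P$ then arise by universality: the forward map sends $v_p \mapsto \dot u_p$ (an isometry since $v_p^* v_p = e_P = 1 \in \wt D_{p^{-1}}$) and $e_\Bx \mapsto \mu(e_\Bx)$, checking the defining relations via the identity $\wt\theta_p(e_\Bx) = e_{p\Bx}$; the reverse map realizes each $\dot u_g$ as the spectral component of a word $v_{p_1}^* v_{q_1} \cdots v_{p_n}^* v_{q_n} \in \P_{s,g}$ using the universal property of the partial crossed product. Alternatively, this step follows at once by combining $\ca_s(P) \simeq \ca(\I_V)$ from Theorem \ref{T:inv sgrp} with Exel's identification of $G$-graded inverse semigroup C*-algebras with partial crossed products.

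For the second isomorphism, the canonical surjection $\pi \colon \ca_s(P) \to \ca_{\scv}(P)$ of Proposition \ref{P:scv ex} is $G$-equivariant, so its kernel is an induced ideal of $\ca_s(P) \simeq D_s(P) \rtimes G$ and is determined by $I := \ker\pi \cap D_s(P)$. Writing $I = I(Y)$ for a closed $G$-invariant $Y \subseteq {\rm Spec}\, D_s(P)$, one obtains $\ca_{\scv}(P) \simeq C(Y) \rtimes G$. Since $\ca_{\scv}(P) \neq 0$, $Y$ is non-empty, so the minimality of $\partial \Om_P$ as a closed non-empty $G$-invariant subspace of ${\rm Spec}\, D_s(P)$ (recalled just before the theorem, citing \cite[Lemma 5.7.10]{CELY17}) forces $\partial \Om_P \subseteq Y$. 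For the reverse inclusion, I would factor the functorial $*$-epimorphism $D_s(P) \rtimes G \to C(\partial \Om_P) \rtimes G$ through $\pi$; this requires showing that every $f \in \I_e$ has image in $D_s(P)$ vanishing on $\partial \Om_P$, which follows from translating Sehnem's asymptotic definition of $\I_e$ in terms of the compressions $\Phi_F$ into the intrinsic minimality characterization of $\partial \Om_P$. Once produced, the resulting $G$-equivariant surjection $\ca_{\scv}(P) \to C(\partial \Om_P) \rtimes G$ is forced to be injective on the fixed point algebra by Theorem \ref{T:inj ex}, giving $Y \subseteq \partial \Om_P$.

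The reduced assertion then follows by applying the regular representation of the Fell bundle $\P_{\scv}$: since the isomorphism above realizes $\ca(\P_{\scv}) \simeq \ca_{\scv}(P)$ (Theorem \ref{T:Fell bdl univ}) as a full partial crossed product, passing to reduced C*-algebras yields $\ca_\la(\P_{\scv}) \simeq C(\partial \Om_P) \rtimes_r G = \partial \ca_\la(P)$. The main obstacle throughout is the factorization argument in the second step: reconciling the a priori asymptotic strong covariance relations defining $\ca_{\scv}(P)$ with the intrinsic minimality of $\partial \Om_P$ requires carefully translating between Sehnem's bundle-theoretic description and Li's spectral description of the boundary, and is where the bulk of the technical work will lie.
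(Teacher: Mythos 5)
Your proposal is correct and follows essentially the same route as the paper: establish $\ca_s(P)\simeq D_s(P)\rtimes G$ by transporting Li's partial-action picture from the reduced case to the universal level via universal properties, realize $\ca_{\scv}(P)$ as $C(Y)\rtimes G$ because its defining ideal is induced, pin down $Y=\partial\Om_P$ by combining minimality with the fixed-point-algebra faithfulness of Theorem \ref{T:inj ex}, and then pass to reduced algebras using Theorem \ref{T:Fell bdl univ} together with the fact that the reduced cross-sectional algebra of the Fell bundle of a partial crossed product is the reduced partial crossed product. One correction on where the difficulty lies: the factorization of $D_s(P)\rtimes G\to C(\partial\Om_P)\rtimes G$ through $\pi$, which you flag as requiring a delicate translation of Sehnem's asymptotic relations into the spectral description of the boundary and as the place where ``the bulk of the technical work will lie,'' is in fact immediate from the inclusion $\partial\Om_P\subseteq Y$ that you have already established from minimality and $Y\neq\mt$. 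Indeed, $\ker\pi$ is the induced ideal of functions vanishing on $Y$, and every such function vanishes on the smaller set $\partial\Om_P$, so the restriction map $C(Y)\rtimes G\to C(\partial\Om_P)\rtimes G$ exists with no further input from the strong covariance relations; no pointwise analysis of $\I_e$ on $\partial\Om_P$ is needed. The only substantive input from Sehnem's theory is Theorem \ref{T:inj ex}, applied exactly as you do at the end to force $C(Y)\simeq C(\partial\Om_P)$; note also (as the paper does) that one should record that $C(\partial\Om_P)\rtimes G$ inherits a coaction of $G$ from $D_s(P)\rtimes G$ by exactness of the universal partial crossed product construction, since admitting a coaction is the hypothesis under which that theorem applies.
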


\begin{proof}
The first part follows directly as in the reduced case \cite[Proposition 3.10]{Li17}, where now the $*$-isomorphism is implemented by the universal properties.
Since $\ca_{\scv}(P)$ is a quotient of $\ca_s(P)$ by an induced ideal we then get that
\[
\ca_{\scv}(P) \simeq C(Y) \rtimes G
\qfor 
C(Y) : = [\ca_{\scv}(P)]_e.
\]
It is clear that $C(\partial \Om_P) \rtimes G$ inherits a coaction of $G$ from $D_s(P) \rtimes G$ due to exactness of the universal construction \cite{Exe17}.
By definition there exists a unital $G$-equivariant $*$-epimorphism
\[
\ca_{\scv}(P) \simeq C(Y) \rtimes G \to C(\partial \Om_P) \rtimes G.
\]
Then Theorem \ref{T:inj ex} yields that the map is $*$-isomorphic on the fixed point algebra and thus
\[
C(Y) \simeq [\ca_{\scv}(P)]_e \simeq [C(\partial \Om_P) \rtimes G]_e \simeq C(\partial \Om_P).
\]
Therefore we get the required $Y \simeq \partial \Om_P$, and thus
\[
\ca(\P_{\scv}) \simeq \ca_{\scv}(P) \simeq C(\partial \Om_P) \rtimes G.
\]
Recall here that $\ca_{\scv}(P) \simeq \ca(\P_{\scv})$ from Theorem \ref{T:Fell bdl univ}.
Since $\P_{\scv}$ is the Fell bundle of a partial crossed product \cite[Proposition 16.28]{Exe17}, we then get that its reduced C*-algebra passes down to the reduced partial crossed product giving
\[
\ca_\la(\P_{\scv}) \simeq C(\partial \Om_P) \rtimes_r G,
\]
and the proof is complete.
\end{proof}

\begin{remark}
In \cite{LS21} Laca and Sehnem identify the exact relations that define the Fell bundle of the canonical coaction of $G$ on $\ca_\la(P)$.
These come from $*$-representations $\Phi$ of $\ca_s(P)$ that satisfy the additional axiom that $\prod_{\By \in F} (\Phi(e_{\Bx}) - \Phi(e_{\By})) = 0$ whenever $F \subseteq \J$ is finite and $\Bx = \cup_{\By \in F} \By$.
The quotient of $\ca_s(P)$ determined by this set of extra relations in the context of \cite{LS21} is called \emph{the universal Toeplitz algebra of $P$}, and is denoted by $\T_u(P)$.
The quotient map $\ca_s(P) \to \ca_{\scv}(P)$ from Proposition \ref{P:scv ex} can also be seen by combining \cite[Proposition 3.22]{LS21} with the isomorphism of items (1) and (2) in \cite[Theorem 6.13]{LS21}.
The isomorphism $\ca_{\scv}(P) \simeq C(\partial \Om_P) \rtimes G$ from Theorem \ref{T:un pcp} also appears in \cite[Theorem~6.13]{LS21} as the partial crossed product realization of the covariance algebra of the product system with one-dimensional fibers over $P$, which is viewed there as the natural full boundary quotient of $\T_u(P)$.
\end{remark}

\section{Noncommutative boundaries and co-boundaries}\label{S:nc bdy}

We will show that $\partial \ca_\la(P)$ carries two types of co-universality.
The first one is in the C*-context with respect to the representations of $\ca_s(P)$, while the second one
is in the nonselfadjoint context, with respect to contractive representations of the tensor algebra $\A(P)$ that are compatible with the canonical coaction of $G$.
We introduce some terminology to make this precise.

\begin{definition}
Let $P$ be a unital semigroup in a group $G$.
An isometric semigroup representation $T$ of $P$ will be called \emph{constructible} if it induces a representation of $\ca_s(P)$.

We denote by $\partial_G [\ca_s(P)]$ the C*-algebra generated by a $G$-equivariant constructible isometric representation of $P$ with the following co-universal property:
for every $G$-equivariant constructible isometric representation $T\colon P \to \B(H)$ there exists a canonical $*$-epimorphism $\ca(T) \to \partial_G [\ca_s(P)]$.
\end{definition}

We now prove the first co-universal property of $\ca_s(P)$.

\begin{theorem}\label{T:co-universal}
Let $P$ be a unital semigroup in a group $G$.
Then
\[
\partial \ca_\la(P) \simeq \partial_G [\ca_s(P)].
\]
\end{theorem}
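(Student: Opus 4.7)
The plan is to verify that $\partial \ca_\la(P) = C(\partial \Om_P) \rtimes_r G$ itself satisfies the co-universal property defining $\partial_G[\ca_s(P)]$, which simultaneously establishes existence of the latter and the claimed isomorphism. The composition $\ca_s(P) \to \ca_\la(P) \to \partial \ca_\la(P)$ endows $\partial \ca_\la(P)$ with a $G$-equivariant constructible isometric representation of $P$, placing it in the relevant class; what remains is to show that every other such representation admits a canonical $*$-epimorphism onto it.

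Fix a non-trivial $G$-equivariant constructible isometric representation $T \colon P \to \B(H)$, and let $\Phi \colon \ca_s(P) \to \ca(T)$ denote the induced $G$-equivariant $*$-epimorphism. The argument then proceeds in three steps. First, $G$-equivariance of $\Phi$ forces $\ker \Phi$ to be a graded ideal of $\ca_s(P)$; via the identification $\ca_s(P) \simeq D_s(P) \rtimes G$ from Theorem~\ref{T:un pcp}, together with the fact that in a universal partial crossed product every graded ideal is induced from its intersection with the coefficient algebra, one obtains $\ker \Phi = \sca{J}$ for $J := \ker \Phi \cap D_s(P)$. Let $Y \subseteq \mathrm{Spec}(D_s(P))$ be the closed $G$-invariant subset dual to $J$; non-triviality of $T$ forces $Y$ to be non-empty. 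Second, by \cite[Lemma 5.7.10]{CELY17}, $\partial \Om_P$ is the smallest non-empty closed $G$-invariant subset of $\mathrm{Spec}(D_s(P))$, hence $\partial \Om_P \subseteq Y$ and $J \subseteq \ker(D_s(P) \to C(\partial \Om_P))$. Functoriality of the universal partial crossed product then yields a $G$-equivariant $*$-epimorphism
\[
\ca(T) \simeq C(Y) \rtimes G \twoheadrightarrow C(\partial \Om_P) \rtimes G \simeq \ca_{\scv}(P),
\]
where the last isomorphism is again Theorem~\ref{T:un pcp}. Third, composing with the canonical full-to-reduced quotient $\ca_{\scv}(P) \to C(\partial \Om_P) \rtimes_r G = \partial \ca_\la(P)$, also provided by Theorem~\ref{T:un pcp}, yields the required $*$-epimorphism $\ca(T) \to \partial \ca_\la(P)$.

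The hard part will be the structural claim underlying the first step: that an arbitrary $G$-equivariant quotient of $\ca_s(P)$ is itself isomorphic to a universal partial crossed product $C(Y) \rtimes G$ for some $G$-invariant closed subspace $Y \subseteq \mathrm{Spec}(D_s(P))$. This relies on the characterization of graded ideals of universal partial crossed products as induced from $G$-invariant ideals of the coefficient algebra (cf.\ \cite[Proposition A.1]{CLSV11}) and crucially uses that one is working with the full, rather than the reduced, crossed product at this universal level---for the reduced partial crossed product, the full-to-reduced kernel is graded but generally not induced, which is precisely why this reduction must be carried out at the level of $\ca_s(P)$ rather than $\ca_\la(P)$. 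Once this structural reduction is in hand, the rest of the argument combines the minimality of $\partial \Om_P$ in $\mathrm{Spec}(D_s(P))$ with the functoriality built into the crossed product constructions of Theorem~\ref{T:un pcp}.
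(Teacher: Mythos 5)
Your overall strategy (cut down to the minimal invariant subset of $\mathrm{Spec}(D_s(P))$ and use functoriality of crossed products) is the right one, but the structural claim underpinning your first step is false, and the gap is genuine. $G$-equivariance of a quotient map does \emph{not} force its kernel to be a graded (equivalently, induced) ideal: equivariance only means the coaction descends to the quotient, and the kernel may meet every spectral subspace trivially while being nonzero. A counterexample occurs inside this very setting: take $P=G$ a non-amenable group. Then $\J=\{G,\mt\}$, $D_s(P)=\bC$, $\ca_s(P)=\ca(G)=C(\{\mathrm{pt}\})\rtimes G$, and the left regular representation is a non-trivial $G$-equivariant constructible representation with $\ca(T)=\ca_\la(G)$. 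Its kernel intersects $D_s(P)$ trivially, yet $\ca(T)\not\simeq C(Y)\rtimes G$ for the only available nonempty $Y=\{\mathrm{pt}\}$. So the identification $\ca(T)\simeq C(Y)\rtimes G$ fails, and with it the functoriality argument that is supposed to produce a map \emph{out of} $\ca(T)$. (The cited \cite[Proposition A.1]{CLSV11} goes the other way: induced ideals yield equivariant quotients, not conversely; and note that the kernel of a full-to-reduced map is in general not even graded, since its intersection with each fiber vanishes.)

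The paper's proof circumvents exactly this point. Rather than trying to realize $\ca(T)$ as a full partial crossed product, it uses the canonical surjection from $\ca(T)$ onto the reduced cross-sectional C*-algebra $\ca_\la(\T)$ of its own topological grading $\T=\{[\ca(T)]_g\}_{g\in G}$; this surjection always exists by \cite[Proposition 21.3]{Exe17}, regardless of whether the kernel upstairs is induced. Proposition \ref{P:ind inv} identifies $[\ca(T)]_e=C(Y)$ for a closed $G$-invariant $Y\subseteq\mathrm{Spec}(D_s(P))$, the faithful conditional expectation then gives $\ca_\la(\T)\simeq C(Y)\rtimes_r G$, and restriction to the closed invariant subset $\partial\Om_P\subseteq Y$ --- which is always available at the reduced level --- yields $C(Y)\rtimes_r G\to C(\partial\Om_P)\rtimes_r G=\partial\ca_\la(P)$. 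Your second and third steps (minimality via \cite[Lemma 5.7.10]{CELY17}, and landing in the reduced boundary crossed product) survive intact once this replacement is made; it is only the passage from $\ca(T)$ to a crossed product over $Y$ that must be carried out at the reduced, not the universal, level.
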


\begin{proof}
Let $T$ be an non-trivial $G$-equivariant representation of $\ca_s(P) \simeq D_s(P) \rtimes G$.
Let $\T = \{\ca(T)_g\}_{g \in G}$ be the associated Fell bundle on $\ca(T)$.
Since $T$ is non-trivial, the isometry $T_p$ is non-zero for every $p \in P$, so the Fell bundle $\T$ is non-zero and thus $\ca_\la(\T)$ is non-zero.
By \cite[Proposition 21.3]{Exe17} there is a canonical $*$-epimorphism
\[
D_s(P) \rtimes_r G \simeq \ca_\la(\P_s) \to \ca_\la(\T).
\]
By Proposition \ref{P:ind inv} we have that $[\ca(T)]_e = C(Y)$ for some $G$-invariant closed subspace of the spectrum of $D_s(P)$.
Therefore, due to the existence of the faithful conditional expectation on $\ca_\la(\T)$ we can write
\[
\ca_\la(\T) \simeq C(Y) \rtimes_r G.
\]
By \cite[Lemma 5.7.10]{CELY17} we have the inclusion $\partial \Om \hookrightarrow Y$.
We deduce that
\[
\ca(T) \to \ca_\la(\T) \simeq C(Y) \rtimes_r G \to C(\partial \Om) \rtimes_r G,
\]
and the proof is complete.
\end{proof}

The next aim is to show that $\partial \ca_\la(P)$ is the C*-envelope of the cosystem $(\A(P), G, \ol{\de})$.
First we show that it is a boundary quotient of $\A(P)$, and hence the terminology makes sense both in the selfadjoint and the nonselfadjoint context.

\begin{proposition}\label{P:boundary}
Let $P$ be a unital semigroup in a group $G$ and $Y$ be a closed $G$-invariant subspace of $\Om_P$.
Then the canonical embedding
\[
\A(P) \hookrightarrow C(\Om_P) \rtimes_r G \to C(Y) \rtimes_r G
\]
is unital completely isometric.
Thus $C(Y) \rtimes_r G$ becomes a (normal) C*-cover for $(\A(P), G, \ol{\de})$.
\end{proposition}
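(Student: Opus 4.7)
The statement splits into two independent claims: (ii) $C(Y) \rtimes_r G$ carries a normal coaction of $G$ for which $q \colon \ca_\la(P) \to C(Y) \rtimes_r G$ is equivariant, and (i) the composition $\iota_Y := q \circ \iota$ is unital completely isometric, where $\iota \colon \A(P) \hookrightarrow \ca_\la(P) \simeq C(\Om_P) \rtimes_r G$ is the canonical inclusion.

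For (ii), the plan is to observe that since $Y$ is closed and $G$-invariant in $\Om_P$, the ideal $C_0(\Om_P \setminus Y) \lhd C(\Om_P)$ is $G$-invariant, so the ideal it generates in $C(\Om_P) \rtimes_r G$ is induced and coincides with $\ker q$. Induced ideals are preserved by normal coactions, so $C(Y) \rtimes_r G$ inherits a normal coaction $\ol{\de}_Y$ of $G$ and $q$ becomes $G$-equivariant. Combined with (i) this verifies that $(C(Y) \rtimes_r G, \iota_Y, \ol{\de}_Y)$ is a normal C*-cover of $(\A(P), G, \ol{\de})$ in the sense of Definition~\ref{D:co-action}.

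For (i), the map is automatically unital and completely contractive, so only the reverse inequality $\|\iota_Y(a)\| \geq \|a\|_{\A(P)}$ on matrix amplifications remains. The plan is to reduce to the extremal case $Y = \partial \Om_P$: every closed $G$-invariant $Y \subseteq \Om_P$ contains $\partial \Om_P$ by minimality \cite[Lemma 5.7.10]{CELY17}, so there is a $G$-equivariant $*$-epimorphism $C(Y) \rtimes_r G \twoheadrightarrow C(\partial \Om_P) \rtimes_r G = \partial \ca_\la(P)$, and a complete isometry of $\A(P)$ into $\partial \ca_\la(P)$ forces complete isometry of the intermediate embedding. For the base case $Y = \partial \Om_P$, invoke Theorem~\ref{T:un pcp} to identify $\partial \ca_\la(P)$ with $\ca_\la(\P_{\scv})$; under this identification $\iota_Y(V_p)$ is the canonical isometry generating the $p$-th fiber of the strong covariance Fell bundle, which remains nonzero by Theorem~\ref{T:inj ex}. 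To promote this fiberwise non-vanishing to complete isometry on the closed span $\A(P) = \ol{\spn}\{V_p \mid p \in P\}$, exhibit a concrete faithful $*$-representation of $\ca_\la(\P_{\scv})$ by inducing from a character $\chi \in \partial \Om_P$: under the induced representation on $\ell^2(G_\chi)$ the image $\iota_Y(V_p)$ acts by a partial translation that, using minimality of the orbit of $\chi$ in $\partial \Om_P$, reproduces a direct summand of the regular representation of $P$ on $\ell^2(P)$. Assembling a separating family of such induced representations yields a representation whose restriction to $\A(P)$ recovers the full $\A(P)$-norm.

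The main obstacle is the last assembly step, which requires controlling the partial orbit structure on $\partial \Om_P$ and ensuring that induced characters separate matrix norms of elements of $\A(P)$. In the Ore case this is transparent since each orbit is all of $G$ and the induced representation realizes the map $V_p \mapsto \la_p \in \ca_\la(G)$ directly; in the general setting, minimality of $\partial \Om_P$ is essential to provide sufficiently many characters, and the faithful conditional expectation from Corollary~\ref{C:cond exp inj} is what secures that the positively graded subalgebra containing $\iota_Y(\A(P))$ is not collapsed in passing from $\ca_\la(P)$ to its boundary quotient.
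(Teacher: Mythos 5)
Your overall strategy for the complete isometry---locate a copy of the left regular representation of $P$ inside a representation of $C(Y)\rtimes_r G$---is the right instinct and is essentially what the paper does, but your write-up never supplies the one fact that makes it work, and instead stalls on an irrelevant difficulty. The key point, which you do not identify, is that for every $p\in P$ one has $V_p^*V_p=I\in D_{p^{-1}}$, hence $\Om_{p^{-1}}=\Om_P$; consequently $\Om_{(ph)^{-1}}\cap\Om_{h^{-1}}\cap Y=Y$ for all $h\in P$, so the projection $Q_P\otimes I$ of $\ell^2(G)\otimes\ell^2(Y)$ onto $\ell^2(P)\otimes\ell^2(Y)$ satisfies $\iota(V_p)\cdot(Q_P\otimes I)=V_p\otimes I$. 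This single computation exhibits $\ell^2(P)\otimes\ell^2(Y)$ as an invariant subspace for $\iota(\A(P))$ on which the restriction is $a\mapsto a\otimes I$, whence $\|[\iota(a_{ij})]\|\geq\|[\iota(a_{ij})](I_n\otimes Q_P\otimes I)\|=\|[a_{ij}]\|$ directly, for \emph{every} closed $G$-invariant $Y$---no reduction to $\partial\Om_P$, no identification with $\ca_\la(\P_{\scv})$, and no assembly of induced representations is needed. Your stated ``main obstacle'' (building a separating family of induced characters) is a red herring: a lower bound on $\|\iota(a)\|$ requires only one subrepresentation dominating the regular representation, not faithfulness on the ambient C*-algebra. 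Moreover your central assertion---that the representation induced from $\chi\in\partial\Om_P$ ``reproduces a direct summand of the regular representation'' using minimality of the orbit---is both unproved and misstated: $\ell^2(P)$ sits inside $\ell^2(G_\chi)$ as an invariant (not reducing) subspace, the reason being precisely the identity $\Om_{p^{-1}}=\Om_P$ above, and minimality plays no role. Nonvanishing of the generators (your appeal to Theorem \ref{T:inj ex}) gives nothing toward isometry.

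There is also a gap in your argument for the coaction. You assume that $\ker\bigl(C(\Om_P)\rtimes_r G\to C(Y)\rtimes_r G\bigr)$ equals the induced ideal generated by $C_0(\Om_P\setminus Y)$ and that a normal coaction descends through it; but equality of that kernel with the induced ideal, and the descent of \emph{normal} coactions through induced ideals, are only available when $G$ is exact (cf.\ the remark after Definition \ref{D:co-action}), and exactness is not assumed in this proposition. The normal coaction on $C(Y)\rtimes_r G$ should instead be taken directly from the construction of the reduced partial crossed product in Section \ref{S:sgrp alg} (the $\ad_U$ picture, which always yields a normal coaction $\de'$), after which one verifies $(\iota\otimes\id)\ol{\de}(V_p)=\de'\iota(V_p)$ on generators.
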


\begin{proof}
Let us denote by $\iota \colon \A(P) \to C(Y) \rtimes_r G$ the canonical completely contractive embedding, where $C(Y) \rtimes_r G$ acts on $\ell^2(G) \otimes \ell^2(Y)$.
By construction
\[
\iota(V_p) = (\la_p \otimes I) \cdot E_p
\foral 
p \in P,
\]
where $E_p$ is the projection on 
\[
E_p(\ell^2(G) \otimes \ell^2(Y)) = \sumoplus_{h \in G} \bC\de_h \otimes \ell^2(\Om_{(p h)^{-1}} \cap \Om_{h^{-1}} \cap Y).
\]
Notice that for $r \in P$ we have that $I = V_r^* V_r \in D_{r^{-1}}$ and so $\Om_{r^{-1}} = \Om_P$.
Hence
\[
\Om_{(p h)^{-1}} \cap \Om_{h^{-1}} \cap Y = Y \foral h \in P.
\]
Thus, if $Q_P \colon \ell^2(G) \to \ell^2(P)$ is the canonical projection we see that
\[
E_p \cdot (Q_P \otimes I) = (Q_P \otimes I) \cdot E_p = (Q_P \otimes I)
\]
for all $p \in P$.
Therefore
\[
\iota(V_p) \cdot (Q_P \otimes I)
=
(\la_p \otimes I) \cdot (Q_P \otimes I)
=
V_p \otimes I
=
(Q_P \otimes I) \cdot (\la_p \otimes I) \cdot (Q_P \otimes I),
\]
and consequently
\[
\iota(a) \cdot (Q_P \otimes I) = a \otimes I
\foral
a \in \A(P).
\]
Let $n \in \bN$ and $[a_{ij}] \in \M_n \otimes \A(P)$.
Then we have that
\[
\| [a_{ij}] \|
\geq
\| [\iota(a_{ij})] \|
\geq
\| [\iota(a_{ij})] \cdot (I_n \otimes Q_P \otimes I) \|
=
\| [a_{ij} \otimes I] \|
=
\|[a_{ij}]\|,
\]
giving that the map $\id_n \otimes \iota$ is isometric.
As this holds for any $n \in \bN$ it follows that $\iota$ is completely isometric.

Finally recall that $C(Y) \rtimes_r G$ admits a normal coaction $\de'$ of $G$.
By definition we have that
\[
(\iota \otimes \id) \ol{\de}(V_p) = \dot{u}_p \otimes u_p = ((\la_p \otimes I) \cdot E_p) \otimes u_p = \de' \iota(V_p)
\]
for all $p \in P$, and the proof is complete.
\end{proof}

We can now prove the second co-universal result for $\partial \ca_\la(P)$.

\begin{theorem}\label{T:part}
Let $P$ be a unital semigroup in a group $G$.
Then
\[
\partial \ca_\la(P) \simeq \cenv(\A(P), G, \ol{\de}).
\]
\end{theorem}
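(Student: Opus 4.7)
The plan is to construct a canonical $G$-equivariant $*$-isomorphism $\partial \ca_\la(P) \simeq \cenv(\A(P), G, \ol{\de})$ as a pair of mutually inverse $*$-epimorphisms, each produced by a co-universal property established earlier in the paper.

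First I would apply Proposition \ref{P:boundary} with $Y := \partial \Om_P$: the boundary quotient $\partial \ca_\la(P) = C(\partial \Om_P) \rtimes_r G$ is a normal C*-cover of the cosystem $(\A(P), G, \ol{\de})$. The defining co-universal property of $\cenv(\A(P), G, \ol{\de})$ then produces a $G$-equivariant $*$-epimorphism
$$\Phi \colon \partial \ca_\la(P) \longrightarrow \cenv(\A(P), G, \ol{\de})$$
that restricts to the canonical completely isometric inclusion $\iota \colon \A(P) \hookrightarrow \cenv(\A(P), G, \ol{\de})$ on $\A(P)$.

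For the reverse arrow, I would set $T_p := \Phi(V_p) = \iota(V_p)$ for $p \in P$. Since $\Phi$ is a unital $*$-homomorphism, $\{T_p\}_{p \in P}$ is an isometric representation of $P$. Composing $\Phi$ with the chain of canonical $*$-epimorphisms $\ca_s(P) \twoheadrightarrow \ca_\la(P) \twoheadrightarrow \partial \ca_\la(P)$ exhibits $T$ as constructible in the sense of the paragraph preceding Theorem \ref{T:co-universal}. Equivariance of $\Phi$ together with $\ol{\de}(V_p) = V_p \otimes u_p$ gives $\de_{\env}(T_p) = T_p \otimes u_p$, so $T$ is $G$-equivariant, and each $T_p$ is a non-zero isometry, making $T$ non-trivial. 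Since $\iota(\A(P))$ is generated as an operator algebra by $\{T_p : p \in P\}$ and generates $\cenv(\A(P), G, \ol{\de})$ as a C*-algebra, we have $\cenv(\A(P), G, \ol{\de}) = \ca(T_p : p \in P)$. Theorem \ref{T:co-universal} then yields a canonical $*$-epimorphism
$$\Psi \colon \cenv(\A(P), G, \ol{\de}) \longrightarrow \partial \ca_\la(P)$$
with $\Psi(T_p) = V_p$ for every $p \in P$.

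Finally, $\Psi \circ \Phi$ is a $*$-endomorphism of $\partial \ca_\la(P)$ fixing $\{V_p : p \in P\}$ and hence $\{V_p, V_p^* : p \in P\}$, so it is the identity; dually $\Phi \circ \Psi$ fixes the generating subalgebra $\iota(\A(P))$ of $\cenv(\A(P), G, \ol{\de})$ and is the identity there. Both composites are $G$-equivariant by construction, yielding the desired canonical equivariant $*$-isomorphism. The main care-point is the verification in the second paragraph that $T$ meets all three hypotheses (isometric, constructible, $G$-equivariant) required by Theorem \ref{T:co-universal}; once this is in hand, the two co-universal properties snap together to give the isomorphism.
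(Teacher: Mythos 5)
Your proof is correct, but its second half takes a genuinely different route from the paper's. Both arguments begin identically: Proposition \ref{P:boundary} exhibits $\partial \ca_\la(P)$ as a C*-cover of the cosystem, and the defining co-universal property of $\cenv(\A(P), G, \ol{\de})$ produces the equivariant $*$-epimorphism $\Phi$. At that point the paper proves injectivity of $\Phi$ directly: using Theorem \ref{T:DKKLL} and Proposition \ref{P:ind inv} it realizes $\cenv(\A(P), G, \ol{\de})$ as $C(Y) \rtimes_r G$ with $C(Y) = [\cenv(\A(P), G, \ol{\de})]_e$, observes that $\Phi$ intertwines the faithful conditional expectations and so restricts to a surjection $C(\partial \Om_P) \to C(Y)$, and invokes minimality of $\partial \Om_P$ to force $Y = \partial \Om_P$, whence $\Phi$ is faithful. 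You instead manufacture an explicit inverse $\Psi$ by feeding $T_p := \Phi(V_p)$ into Theorem \ref{T:co-universal}; your identification of the care-point is right, and the verification that $T$ is a non-trivial, constructible, $G$-equivariant isometric representation goes through (constructibility via the composite $\ca_s(P) \to \ca_\la(P) \to \partial \ca_\la(P) \to \cenv(\A(P), G, \ol{\de})$, equivariance from the intertwining of coactions, non-triviality since $\Phi$ is unital). The minimality of $\partial \Om_P$ still does the real work, but only implicitly through the proof of Theorem \ref{T:co-universal}. The trade-off: your two-sided co-universality argument is shorter and cleanly reuses an already-proved theorem, while the paper's argument yields as a byproduct the explicit identification $[\cenv(\A(P), G, \ol{\de})]_e \simeq C(\partial \Om_P)$, which your approach recovers only a posteriori from the isomorphism.
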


\begin{proof}
By Theorem \ref{T:DKKLL}, normality of the coaction on $\A(P)$ implements a normal coaction on $\cenv(\A(P), G, \ol{\de})$.
Since any $G$-equivariant quotient of $\ca_\la(P)$ implements a partial action by Proposition \ref{P:ind inv}, faithfulness of the conditional expectation on $\cenv(\A(P), G, \ol{\de})$ yields
\[
\cenv(\A(P), G, \ol{\de}) \simeq C(Y) \rtimes_r G
\qfor
C(Y) := [\cenv(\A(P), G, \ol{\de})]_e.
\]
On the other hand by Proposition \ref{P:boundary} we have that $\partial \ca_\la(P)$ is a C*-cover for $(\A(P), G, \ol{\de})$, and so there exists a canonical $*$-epimorphism
\[
\Phi \colon \partial \ca_\la(P) = C(\partial \Om_P) \rtimes_r G \to \cenv(\A(P), G, \ol{\de})
\]
that intertwines the conditional expectations. 
Hence $\Phi$ restricts to a surjection from $C(\partial \Om_P)$ onto $C(Y)$.
Minimality of $\partial \Om_P$ yields $\partial \Om_P = Y$, and thus $\Phi$ is faithful.
\end{proof}

Recall that the action of $G$ on a partial system $(\{\Om_g\}_{g \in G}, \{\theta_g\}_{g \in G})$ is called \emph{topologically free} if for every $e \neq g \in G$ the set $\{\om \in \Om_{g^{-1}} \mid \theta_g(\om) \neq \om\}$ is dense in $\Om_{g^{-1}}$.
Next we see that under a topological freeness assumption we have that the \v{S}ilov boundary of the cosystem is the usual \v{S}ilov boundary of $\A(P)$.

\begin{theorem}\label{T:top free}
Let $P$ be a unital semigroup in a group $G$.
If the partial action of $G$ on $\partial \Om_P$ is topologically free then
\[
\partial \ca_\la(P) \simeq \cenv(\A(P)).
\]
\end{theorem}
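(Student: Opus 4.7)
The plan is to combine Theorem \ref{T:part} with a simplicity result for the reduced partial crossed product. By Proposition \ref{P:boundary} and Theorem \ref{T:part}, the algebra $\partial \ca_\la(P) \simeq C(\partial \Om_P) \rtimes_r G$ is already a C*-cover of $\A(P)$. To identify it with $\cenv(\A(P))$ it suffices to check that $\partial \ca_\la(P)$ has no non-trivial boundary ideal with respect to the embedding $\A(P) \hookrightarrow \partial \ca_\la(P)$. The cleanest way to do this is to show that $\partial \ca_\la(P)$ is in fact \emph{simple}, whence the \v{S}ilov ideal, being a proper ideal, must vanish and $\partial \ca_\la(P) \simeq \cenv(\A(P))$ follows immediately.

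For simplicity I would invoke the intersection property for reduced crossed products by topologically free partial actions. Under the hypothesis that $G \curvearrowright \partial \Om_P$ is topologically free, the partial-action analogue of the Archbold--Spielberg theorem (available in Exel \cite{Exe17}) guarantees that every non-zero closed ideal $J \lhd C(\partial \Om_P) \rtimes_r G$ satisfies $J \cap C(\partial \Om_P) \neq (0)$. Such a non-zero intersection is a $G$-invariant ideal of $C(\partial \Om_P)$, so by Gelfand duality it corresponds to a non-empty open $G$-invariant subset $U$ of $\partial \Om_P$ whose complement $Z := \partial \Om_P \setminus U$ is a closed, proper, $G$-invariant subspace. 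By the defining minimality of $\partial \Om_P$ as the smallest non-empty closed $G$-invariant subspace of $\Om_P$, any such $Z$ must be empty, forcing $J \cap C(\partial \Om_P) = C(\partial \Om_P)$ and hence $J = C(\partial \Om_P) \rtimes_r G$. Thus $\partial \ca_\la(P)$ is simple, and the theorem follows.

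The main obstacle lies in correctly citing and applying the intersection property in the partial-action, reduced-crossed-product setting. The notion of topological freeness used in the statement (density of $\{\om \in \Om_{g^{-1}} \mid \theta_g(\om) \neq \om\}$ in $\Om_{g^{-1}}$ for $g \neq e$) is precisely the hypothesis of Exel's theorem, so the compatibility is automatic, but one must take care that the crossed product under consideration is indeed the \emph{reduced} one; the full crossed product $C(\partial \Om_P) \rtimes G$ need not be simple when $G$ is not amenable, which is why Theorem \ref{T:part} provides $\partial \ca_\la(P)$ as the reduced partial crossed product precisely to make this argument available without any amenability assumption on $G$.
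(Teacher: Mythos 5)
Your proposal is correct and follows essentially the same route as the paper: both arguments reduce the claim to simplicity of $\partial \ca_\la(P) \simeq C(\partial \Om_P) \rtimes_r G$, which forces the canonical $*$-epimorphism from the C*-cover $\partial \ca_\la(P)$ onto $\cenv(\A(P))$ to be faithful. The only difference is that the paper simply cites \cite[Corollary 3.22]{Li17} for simplicity, whereas you unpack that citation by combining the intersection property for topologically free partial actions with the minimality of $\partial \Om_P$ --- which is exactly the content of the cited result.
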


\begin{proof}
Clearly any C*-cover of the cosystem is a C*-cover for the algebra.
Thus by Theorem \ref{T:part} and the definition of $\cenv(\A(P))$ we have a canonical $*$-epimorphism 
\[
\partial \ca_\la(P) \simeq \cenv(\A(P), G, \ol{\de}) \to \cenv(\A(P)).
\]
By \cite[Corollary 3.22]{Li17} we get that $\partial \ca_\la(P)$ is a simple C*-algebra and thus the $*$-epimorphism is faithful.
\end{proof}

Let us now consider the Ore semigroup case.
Recall that $P$ is a \emph{left-Ore semigroup} if it is cancellative and left-reversible, in the sense that it satisfies $p P \cap q P \neq \mt$ for all $p, q \in P$. 
A semigroup $P$ is  left-Ore  if and only if it embeds in a group $G$ in such a way that $G = P P^{-1}$. Equivalently, the set $P P^{-1}$ of formal right quotients forms a group.
See \cite{Dub43, Ore31}.
Modulo an easy translation from right to left reversibility, Laca \cite[Theorem 1.2]{Lac00} shows that a left-Ore semigroup has the following (universal) extension property for semigroup homomorphisms into groups: every semigroup homomorphism $P \to \mathcal G$ into a group $\G$ has a unique extension to a group homomorphism $G = PP^{-1} \to \G$.
See also \cite[Theorem 2.2.4]{DFK17} for an alternative proof that uses a construction of $G$ by direct limits.
When $P$ is an Ore semigroup then $\partial \Om_P$ is a singleton, hence $\partial \ca_\la(P) \simeq \{ \text{pt} \} \rtimes_r G = \ca_\la(G)$; see the comments following \cite[Definition 5.7.9]{CELY17}.
We will see that if $P$ is left-Ore and $G = P  P^{-1}$ is its enveloping semigroup, then $\cenv(\A(P)) = \ca_\la(G)$.
In fact, we will prove that this is a characteristic property of Ore semigroups.

\begin{theorem} \label{T:Ore}
Let $P$ be a unital semigroup in a group $G$ that it generates.
The following are equivalent:
\begin{enumerate}
\item $P$ is an Ore semigroup (in which case necessarily $G = P P^{-1}$).
\item The map $V_p \mapsto \la_p$ extends to a completely isometric map $\A(P) \to \ca_\la(G)$.
\item $\cenv(\A(P), G, \ol{\de}) \simeq \ca_\la(G)$ by a canonical $*$-homomorphism that fixes $P$.
\item $\cenv(\A(P)) \simeq \ca_\la(G)$ by a canonical $*$-homomorphism that fixes $P$.
\end{enumerate}
If any of the above holds then $\A(P)$ is hyperrigid.
\end{theorem}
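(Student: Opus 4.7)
The strategy is to establish the cycle $(i)\Rightarrow(iii)\Rightarrow(ii)\Rightarrow(i)$ and the equivalence $(ii)\Leftrightarrow(iv)$, then derive hyperrigidity from the conclusion $\cenv(\A(P))\simeq\ca_\la(G)$.

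For $(i)\Rightarrow(iii)$, I would invoke the observation (recalled in the paragraph preceding the theorem) that when $P$ is Ore the minimal boundary $\partial\Om_P$ consists of a single point, so $\partial\ca_\la(P)=C(\text{pt})\rtimes_r G\simeq\ca_\la(G)$; then Theorem~\ref{T:part} identifies this with $\cenv(\A(P),G,\ol\de)$. For $(iii)\Rightarrow(ii)$, under the canonical isomorphism of (iii), the completely isometric embedding $\A(P)\hookrightarrow\cenv(\A(P),G,\ol\de)$ becomes the map $V_p\mapsto\la_p$.

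The substantive step is $(ii)\Rightarrow(i)$, argued by contrapositive. Suppose $P$ is not Ore, so there exist $p,q\in P$ with $pP\cap qP=\mt$. Then in $\ca_\la(P)$ we have $V_p^*V_q=0$, since $V_p^*V_q\de_s\neq 0$ would require $qs\in pP$, which has no solution. Hence $(V_p-V_q)^*(V_p-V_q)=2I$ and $\|V_p-V_q\|=\sqrt{2}$. Setting $g=p^{-1}q\neq e$, we have $\la_p-\la_q=\la_p(I-\la_g)$, and the spectrum of $\la_g$ in $\ca_\la(\sca{g})$ is the set of all $n$-th roots of unity if $g$ has finite order $n$, or the circle $\bT$ if $g$ has infinite order. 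A direct spectral computation shows $\|I-\la_g\|\geq\sqrt{3}$ for every $g\neq e$, with equality attained only when $g$ has order $3$. Consequently $\|\la_p-\la_q\|\geq\sqrt{3}>\sqrt{2}$, contradicting the isometric assumption of (ii).

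For $(ii)\Leftrightarrow(iv)$, the implication $(iv)\Rightarrow(ii)$ is tautological. For $(ii)\Rightarrow(iv)$, the completely isometric embedding $\A(P)\hookrightarrow\ca_\la(G)$ realizes $\ca_\la(G)$ as a C*-cover of $\A(P)$. Any ccp extension $\phi$ to $\ca_\la(G)$ of this embedding satisfies $\phi(\la_p)=\la_p$, a unitary; thus $\la_p$ lies in the multiplicative domain of $\phi$, and since the $\la_p$'s generate $\ca_\la(G)$ as a $*$-algebra, $\phi$ is the identity. By the Dritschel-McCullough criterion \cite{DM05}, the embedding is a maximal representation and $\cenv(\A(P))\simeq\ca(\la_p\mid p\in P)=\ca_\la(G)$. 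The same multiplicative-domain argument, applied to an arbitrary $*$-representation $\pi$ of $\ca_\la(G)$ in place of the identity, shows that the unique ccp extension of $\pi|_{\A(P)}$ is $\pi$ itself, whence $\A(P)$ is hyperrigid. The main obstacle is the spectral estimate $\|I-\la_g\|\geq\sqrt{3}$ in $(ii)\Rightarrow(i)$; the extremal case (torsion of order~$3$) only just yields the strict inequality, so care is needed to rule out all orders of $g$ uniformly.
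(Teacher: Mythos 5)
Your proposal is correct, and the logical cycle you chose ((i)$\Rightarrow$(iii)$\Rightarrow$(ii)$\Rightarrow$(i) together with (ii)$\Leftrightarrow$(iv)) covers all four statements; the interesting divergence from the paper is in (ii)$\Rightarrow$(i). The paper argues directly: from (ii) it first deduces that $\ca_\la(G)$ is the C*-envelope of $\A(P)$ (contractive dilations of unitaries are trivial), hence the C*-cover $\ca_\la(P)$ admits a $*$-epimorphism onto $\ca_\la(G)$ sending $V_p\mapsto\la_p$; evaluating this on $V_pV_p^*V_qV_q^*=E_{[pP\cap qP]}$ gives $I$, forcing $pP\cap qP\neq\mt$. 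Your contrapositive is more elementary and purely metric: if $pP\cap qP=\mt$ then $V_p^*V_q=0$ gives $\|V_p-V_q\|=\sqrt2$, while $\|\la_p-\la_q\|=\|I-\la_{p^{-1}q}\|\geq\sqrt3$ (the norm being computed in $\ca_\la(\sca{p^{-1}q})\subseteq\ca_\la(G)$, with the minimum $\sqrt3$ attained at order $3$ and the value $2$ for infinite or even order), contradicting isometry. This buys a quantitative statement — complete isometry is violated already at the first matrix level, by a definite gap $\sqrt3-\sqrt2$ — and avoids routing through the C*-envelope; the paper's route is shorter given that it needs the envelope identification anyway and delivers hyperrigidity in the same breath. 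Your treatment of (ii)$\Rightarrow$(iv) and of hyperrigidity via multiplicative domains of ucp extensions is the standard argument and matches the paper's "contractive dilations of unitaries are trivial" in substance. One cosmetic remark: the margin $\sqrt3>\sqrt2$ is not as delicate as your closing sentence suggests — no uniformity issue arises, since $\sqrt3$ is a global lower bound over all $g\neq e$.
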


\begin{proof}
We will show [(i) $\Leftrightarrow$ (ii) $\Rightarrow$ (iii) $\Rightarrow$ (iv) $\Rightarrow$ (ii)], and that item (ii) implies hyperrigidity of $\A(P)$.

\noindent
[(i) $\Rightarrow$ (ii)]:
If $P$ is an Ore semigroup then by construction $\partial \Om_P = \{ \text{pt} \}$ is a singleton; see the comments following \cite[Definition 5.7.9]{CELY17}.
Thus $\{ \text{pt} \} \rtimes_r G = \ca_\la(G)$ and the canonical embedding
\[
\A(P) \to \ca_\la(G) ; V_p \mapsto \la_p
\]
is completely isometric by Proposition \ref{P:boundary}.

\noindent
[(ii) $\Rightarrow$ (i)]:
Suppose there is a completely isometric map $\Phi \colon \A(P) \to \ca_\la(G)$ with $\Phi(V_p) = \la_p$ for all $p \in P$.
Then $\ca_\la(G)$ is a C*-cover for $\A(P)$.
Since contractive dilations of unitaries are trivial we get that $\ca_\la(G)$ is the C*-envelope.
For the same reason $\A(P)$ is hyperrigid.

Thus $\Phi$ extends uniquely to a $*$-epimorphism $\ca_\la(P) \to \ca_\la(G)$ which we denote by the same symbol.
Let $p, q \in P$.
Since
\[
\Phi(V_p V_p^* V_q V_q^*) = \la_p \la_p^* \la_q \la_q^* = I,
\]
we have that $V_p V_p^* V_q V_q^* \neq 0$.
It follows that $p P \cap q P \neq \mt$, and thus $P$ satisfies the Ore property.

\noindent
[(ii) $\Rightarrow$ (iii)]:
Assuming item (ii) we have that $\ca_\la(G)$ is a C*-cover of the cosystem $(\A(P), G, \ol{\de})$, and thus there is a canonical $*$-epimorphism
\[
\{ \text{pt} \} \rtimes_r G = \ca_\la(G) \to \cenv(\A(P), G, \ol{\de})
\]
that fixes $\A(P)$ and intertwines the faithful conditional expectations.
As the fixed point algebra of $\ca_\la(G)$ is trivially $\bC$ and the $*$-epimorphism is non-zero, we have that the $*$-epimorphism is faithful.

\noindent
[(iii) $\Rightarrow$ (iv)]:
Assuming item (iii) we have that $\ca_\la(G)$ is a C*-cover for $\A(P)$ by unitaries, and thus (as above) we deduce that it is its C*-envelope.

\noindent
[(iv) $\Rightarrow$ (ii)]:
Assuming item (iv) we have trivially that the map
\[
\A(P) \hookrightarrow \cenv(\A(P)) \simeq \ca_\la(G) ; V_p \mapsto \la_p
\]
is completely isometric, and the proof is complete.
\end{proof}

\begin{remark}\label{R:Ore}
We can combine Theorem \ref{T:co-universal} with Theorem \ref{T:Ore} to deduce that any item of Theorem \ref{T:Ore} is equivalent to having an equivariant $*$-isomorphism $\ca_\la(G) \simeq \partial_G [\ca_s(P)]$.
\end{remark}

\begin{remark}\label{rem:finalremark}
It is interesting to note that Theorem \ref{T:top free} and Theorem \ref{T:Ore}(iv) give the same conclusion $\partial \ca_\la(P) \simeq \cenv(\A(P))$  in opposite situations; the former applies to semigroups with a boundary spectrum that is large enough to support a topologically free partial action of $G$, while the latter applies when the boundary spectrum is a singleton so the boundary action is as far from topologically free as possible. 
Combined, they cover the following classes of examples:
\begin{enumerate}
\item abelian submonoids of groups, in particular, 
multiplicative semigroups of nonzero algebraic integers  (obviously Ore);
\item total orders in groups (obviously Ore);
\item Artin monoids in Artin groups of finite type (Ore by \cite{bri-sai});
\item quasi-lattice orders with trivial core (where the action is topologically free by \cite[Proposition 5.5]{CL07}), in particular, the subclass consisting of Artin monoids in right-angled Artin groups having no direct $\bZ$ factor, \cite[Corollary 5.7]{CL07};
\item the Thompson monoid $F^+$ in the Thompson group $F$ (Ore, in fact, lattice ordered);
\item the $ax+b$-semigroup $R \rtimes R^\times$ over an integral domain that is not a field (where the action is topologically free by combining \cite[Corollary 8]{Li10} with \cite[page 243]{CELY17}, or by \cite[Corollary 8.4 and Proposition 6.18]{LS21}).
\end{enumerate}
In fact only simplicity of the boundary quotient $\partial \ca_\la(P)$ is required in the proof of Theorem \ref{T:top free} to deduce that it coincides $\cenv(\A(P))$. 
A characterization of simplicity for general ring C*-algebras has been identified in \cite[Corollary 8]{Li10}.
\end{remark}



\begin{thebibliography}{99}

\bibitem{Arv69} W.B. Arveson,
\textit{Subalgebras of C*-algebras},
Acta Math.\ \textbf{123} (1969), 141--224.


\bibitem{bri-sai} E. Brieskorn and K. Saito,
\textit{Artin-{G}ruppen und {C}oxeter-{G}ruppen},
Invent.\ Math.\ \textbf{17} (1972), 245--271.

\bibitem{CLSV11} T.M. Carlsen, N.S. Larsen, A. Sims and S.T. Vittadello,
\textit{Co-universal algebras associated to product systems, and gauge-invariant uniquenss theorems},
Proc.\ Lond.\ Math.\ Soc.\ (3) \textbf{103} (2011), no.\ 4, 563--600.


\bibitem{Cob67} L.A. Coburn, 
\textit{The C*-algebra generated by an isometry}, 
Bull.\ Amer.\ Math.\ Soc.\ \textbf{73} (1967), 722--726.


\bibitem{CL02} J. Crisp and M. Laca, 
\textit{On the Toeplitz algebras of right-angled and finite-type Artin groups}, 
J.\ Austr.\ Math.\ Soc.\ \textbf{72} (2002), no.\ 2, 223--245.

\bibitem{CL07} J. Crisp and M. Laca, 
\textit{Boundary quotients of Toeplitz algebras of right-angled Artin groups}, 
J.\ Funct.\ Anal.\ \textbf{242} (2007), no.\ 1, 125--156.

\bibitem{CDL13} J. Cuntz, C. Deninger and M. Laca, 
\textit{C*-algebras of Toeplitz type associated with algebraic number fields}, 
Math.\ Ann.\ \textbf{355} (2013), no.\ 4, 1383--1423.

\bibitem{CEL13} J. Cuntz, S. Echterhoff and X. Li, 
\textit{On the K-theory of crossed products by automorphic semigroup actions}, 
Quart.\ J.\ Math.\ \textbf{64} (2013), no.\ 3, 747--784.

\bibitem{CEL15} J. Cuntz, S. Echterhoff and X. Li, 
\textit{On the K-theory of the C*-algebra generated by the left regular representation of an Ore semigroup}, 
J.\ Eur.\ Math.\ Soc.\ \textbf{17} (2015), no.\ 3, 645--687.

\bibitem{CELY17} J. Cuntz, S. Echterhoff, X. Li and G. Yu, 
\textit{K-Theory for group C*-Algebras and semigroup C*-algebras}, 
Oberwolfach Seminars, 47, Birkh{\"a}user/Springer, Cham, 2017.

\bibitem{DFK17} K.R. Davidson, A.H. Fuller and E.T.A. Kakariadis,
\textit{Semicrossed products of operator algebras by semigroups},
Mem.\ Amer.\ Math.\ Soc.\ \textbf{247} (2017), no.\ 1168, v+97 pp.

\bibitem{DKKLL20} A. Dor-On, E.T.A. Kakariadis, E.G. Katsoulis, M. Laca and X. Li, 
\textit{C*-envelopes of operator algebras with a coaction and co-universal C*-algebras for product systems}, 
preprint.

\bibitem{DK20} A. Dor-On and E.G. Katsoulis, 
\textit{Tensor algebras of product systems and their C*-envelopes}, 
J.\ Funct.\ Anal.\ \textbf{278} (2020), no.\ 7, 108416, 32 pp.

\bibitem{Dou72} R. G. {Douglas},
\textit{On the C*-algebra of a one-parameter semigroup of isometries}, 
Acta Math.\ \textit{128} (1972), no.\ 3--4, 143--151.

\bibitem{DM05} M.A. Dritschel and S.A. McCullough,
\textit{Boundary representations for families of representations of operator algebras and spaces}, 
J.\ Operator Theory \textbf{53} (2005), no.\ 1, 159--167.

\bibitem{Dub43} P. Dubreil, 
\textit{Sur les probl\`{e}mes d'immersion et la th\'{e}orie des modules},
C.\ R.\ Acad.\ Sci.\ Paris \textbf{216} (1943), 625--627.


\bibitem{Exe97} R. Exel,
\textit{Amenability of Fell bundles},
J.\ Reine Angew.\ Math.\ \textbf{492} (1997), 41--73.

\bibitem{Exe17} R. Exel,
\textit{Partial dynamical systems, Fell bundles and applications},
volume \textbf{224} of \textit{Mathematical Surveys and Monographs, American Mathematical Society}.
American Mathematical Society, Providence, RI, 2017. vi+321 pp.

\bibitem{Ham79} M. Hamana,
\textit{Injective envelopes of operator systems}, 
Publ.\ Res.\ Inst.\ Math.\ Sci.\ \textbf{15} (1979), no.\ 3, 773--785.

\bibitem{Kak19} E.T.A. Kakariadis,
\textit{Finite dimensional approximations for Nica-Pimsner algebras},
Ergodic Theory Dynam.\ Systems \textbf{40} (2020), no.\ 12, 3375-3402.


\bibitem{Lac00} M. Laca, 
\textit{From endomorphisms to automorphisms and back: dilations and full corners}, 
J.\ London Math.\ Soc.\ (2) \textbf{61} (2000), no.\ 3, 893--904.

\bibitem{LS21} M. Laca, C. Sehnem,
\textit{Toeplitz algebras of semigroups}, 
preprint (preprint at arXiv:2101.06822).

\bibitem{LOS18} X. Li, T. Omland and J. Spielberg, 
\textit{C*-algebras of right LCM one-relator monoids and Artin-Tits monoids of finite type}, 
Commun.\ Math.\ Phys.\ (2020), doi.org/10.1007/200220-020-03758-5.
(preprint at arXiv:1807.08288.)

\bibitem{Li10} X. Li, 
\textit{Ring C*-algebras}, 
Math.\ Ann.\ \textbf{348} (2010), no.\ 4, 859--898.

\bibitem{Li12} X. Li, 
\textit{Semigroup C*-algebras and amenability of semigroups}, 
J.\ Funct.\ Anal.\ \textbf{262} (2012), no.\ 10, 4302--4340. 

\bibitem{Li13} X. Li, 
\textit{Nuclearity of semigroup C*-algebras and the connection to amenability}, 
Adv.\ Math.\ \emph{244} (2013), 626--662.

\bibitem{Li17} X. Li,
\textit{Partial transformation groupoids attached to graphs and semigroups},  
Int.\ Math.\ Res.\ Not.\ IMRN \textbf{2017}, no.\ 17, 5233--5259. 

\bibitem{McC95} K. McClanahan,
\emph{K-theory for partial crossed products by discrete groups},
J.\ Funct.\ Anal.\ \textbf{130} (1995), no.\ 1, 77--117.

\bibitem{Mur87} G.J. {Murphy}, 
\textit{Ordered groups and Toeplitz algebras}, 
J.\ Operator Theory \textbf{18} (1987), no.\ 2, 303--326. 

\bibitem{Nic92} A. Nica,
\textit{C*-algebras generated by isometries and Wiener-Hopf operators},
J.\ Operator Theory \textbf{27} (1992), no.\ 1, 17--52.

\bibitem{Nor14} M.D. Norling, 
\textit{Inverse semigroup C*-algebras associated with left cancellative semigroups}, 
Proc.\ Edinb.\ Math.\ Soc.\ (2) \textbf{57} (2014), no.\ 2, 533--564.

\bibitem{Ore31} O. Ore, 
\textit{Linear equations in non-commutative fields}, 
Ann.\ Math.\ (2) \textbf{32} (1931), no.\ 3, 463--477.

\bibitem{Pau02} V.I. Paulsen,
\textit{Completely bounded maps and operator algebras}, 
volume \textbf{78} of \textit{Cambridge Studies in Advanced Mathematics}.
Cambridge University Press, Cambridge, 2002. xii+300 pp.

\bibitem{Qui96} J. Quigg, 
\textit{Discrete C*-coactions and C*-algebraic bundles}, 
J.\ Austral.\ Math.\ Soc.\ Ser.\ A \textbf{60} (1996), no.\ 2, 204--221.


\bibitem{Seh18} C.F. Sehnem, 
\textit{On C*-algebras associated to product systems}, 
J.\ Funct.\ Anal.\ \textbf{277} (2019), no.\ 2, 558--593.

\end{thebibliography}
\end{document}